\newtheorem{theorem}{Theorem}
\newtheorem{corollary}[theorem]{Corollary}
\newtheorem{definition}[theorem]{Definition}
\newtheorem{proposition}[theorem]{Proposition}
\newtheorem{remark}[theorem]{Remark}
\newenvironment{proof}[1][Proof]{\noindent\textbf{#1.} }{\ \rule{0.5em}{0.5em}}
\begin{document}

\title{A Symmetric Algorithm for Hyperharmonic and Fibonacci Numbers}
\author{Ayhan Dil \\
Department of Mathematics,\\
Akdeniz University, 07058-Antalya, Turkey\\
adil@akdeniz.edu.tr\\
Istv\'an Mez\H{o}\\
Department of Algebra and Number Theory, \\
Institute of Mathematics, University of Debrecen, Hungary \\
imezo@math.klte.hu}
\date{}
\maketitle

\begin{abstract}
In this work, we introduce a symmetric algorithm obtained by the recurrence
relation $a_{n}^{k}=a_{n-1}^{k}+a_{n}^{k-1}$. We point out that this
algorithm can be apply to hyperharmonic-, ordinary and incomplete Fibonacci-
and Lucas numbers. An explicit formulae for hyperharmonic numbers, general
generating functions of the Fibonacci- and Lucas numbers are obtained.

Besides we define "hyperfibonacci numbers", "hyperlucas numbers". Using
these new concepts, some relations between ordinary and incomplete
Fibonacci- and Lucas numbers are investigated.
\end{abstract}

\section{\protect\normalsize Introduction}

The algorithm introduced below is an analog of the Euler-Seidel algorithm 
\cite{Dumont}. These kind of algorithms are useful to investigate some
recurrence relations and identities for some numbers and polynomials.

Having this concept, we give some applications for hyperharmonic numbers,
ordinary and incomplete Fibonacci and Lucas numbers.

First of all, two real initial sequences, denoted by $(a_{n})$ and $(a^{n})$%
, be given. Then the matrix $(a_n^k)$ corresponding to these sequences is
determined recursively by the formulae 
\begin{eqnarray}
a_{n}^{0} &=&a_{n},\text{ }a_{0}^{n}=a^{n}\text{, }(n\geq 0),
\label{sym-rec} \\
a_{n}^{k} &=&a_{n-1}^{k}+a_{n}^{k-1}\text{, }(n\geq 1,\,k\geq 1).  \notag
\end{eqnarray}

With induction, we get following relation which gives us any entries $%
a_{n}^{k}$ ($k$ denotes the row, $n$ is the column) in terms of the first
row's and the first column's elements: 
\begin{equation}
a_{n}^{k}=\sum_{i=1}^{k}\binom{n+k-i-1}{n-1}a_{0}^{i}+\sum_{j=1}^{n}\binom{%
k+n-j-1}{k-1}a_{j}^{0}.  \label{sym-gen.t}
\end{equation}%
By \eqref{sym-gen.t} we the get generating function of any row and column
for the matrix $(a_{n}^{k})$ (see Theorem 3). The relation \eqref{sym-gen.t}
proved to be useful for familiar sequences to investigate their structures.

There are some papers related with this work. Dumont \cite{Dumont} used
another recurrence relation which was given in \cite{Euler}, \cite{Seidel}
and he gave many applications for Bernoulli, Euler, Genocchi etc. numbers.
In \cite{Diletal}, there is a generalization of Euler-Seidel matrices for
Bernoulli, Euler and Genocchi polynomials. Present authors \cite{MD} used
Dumont's method for hyperharmonic numbers, $r$-stirling numbers and for
classification of second order recurrence sequences.

\section{\protect\normalsize Definitions and notation}

\subsection{{\protect\normalsize Euler-Seidel matrices}\textbf{.}}

Let a sequence $(a_{n})$ be given. Then the Euler-Seidel matrix
corresponding to this sequence is determined recursively by the formulae;%
\begin{eqnarray}
a_{n}^{0} &=&a_{n,}\quad (n\geq 0);  \label{c-rec} \\
a_{n}^{k} &=&a_{n}^{k-1}+a_{n+1}^{k-1},\quad (n\geq 0,\,k\geq 1).  \notag
\end{eqnarray}
The first row and column can be transformed into each other via Dumont's
identities \cite{Dumont} 
\begin{eqnarray}
a_{0}^{n} &=&\sum_{k=0}^{n}\binom{n}{k}a_{k}^{0},  \label{c-fr-fc} \\
a_{n}^{0} &=&\sum_{k=0}^{n}\binom{n}{k}(-1)^{n-k}a_{0}^{k}.  \notag
\end{eqnarray}

There is a connection between the generating functions of the initial
sequence $(a_{n})=(a_{n}^{0})$ and the generating functions of the first
column $(a_{0}^{n})$. Namely,

\begin{proposition}[Euler \protect\cite{Euler}]
Let 
\begin{equation}
a(t)=\sum_{n=0}^{\infty }a_{n}^{0}t^{n}  \label{cl-gen-row}
\end{equation}
be the generating function of the initial sequence $(a_{n}^{0})$. Then the
generating function of the sequence $(a_{0}^{n})$ is 
\begin{equation}
\overline{a}(t)=\sum_{n=0}^{\infty }a_{0}^{n}t^{n}=\frac{1}{1-t}a\left( 
\frac{t}{1-t}\right) .  \label{cl-gen-column}
\end{equation}
\end{proposition}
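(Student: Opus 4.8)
The plan is to feed the first of Dumont's identities \eqref{c-fr-fc} directly into the definition of $\overline{a}(t)$ and then reorganize the resulting double sum. Writing $a_{0}^{n}=\sum_{k=0}^{n}\binom{n}{k}a_{k}^{0}$ and substituting into $\overline{a}(t)=\sum_{n\geq 0}a_{0}^{n}t^{n}$ produces a double series in $n$ and $k$ constrained by $0\le k\le n$. Working in the ring of formal power series (so that no convergence question arises), I would interchange the two summations, letting $k$ run from $0$ to $\infty$ and $n$ range over $n\ge k$. This isolates each coefficient $a_{k}^{0}$ and leaves an inner sum $\sum_{n\ge k}\binom{n}{k}t^{n}$ that no longer involves the sequence.

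The key computational step is the evaluation of that inner sum. I would invoke the standard negative-binomial expansion $\frac{1}{(1-t)^{k+1}}=\sum_{m\ge 0}\binom{m+k}{k}t^{m}$; after the shift $n=m+k$ this gives
\begin{equation*}
\sum_{n\ge k}\binom{n}{k}t^{n}=\frac{t^{k}}{(1-t)^{k+1}}.
\end{equation*}
Substituting this back yields $\overline{a}(t)=\sum_{k\ge 0}a_{k}^{0}\,\frac{t^{k}}{(1-t)^{k+1}}$, from which I would factor out $\frac{1}{1-t}$ and recognize the remaining series as $a$ evaluated at $t/(1-t)$:
\begin{equation*}
\overline{a}(t)=\frac{1}{1-t}\sum_{k\ge 0}a_{k}^{0}\left(\frac{t}{1-t}\right)^{k}=\frac{1}{1-t}\,a\!\left(\frac{t}{1-t}\right),
\end{equation*}
which is exactly \eqref{cl-gen-column}.

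The only delicate point is the legitimacy of swapping the order of summation and of performing the substitution $t\mapsto t/(1-t)$. Since everything takes place in the formal power series ring and the coefficient of $t^{n}$ in the final expression receives contributions only from finitely many $k$ (namely $k\le n$, because $t^{k}/(1-t)^{k+1}$ starts at order $t^{k}$), the rearrangement is coefficientwise finite and therefore valid; I would flag this briefly rather than appeal to analytic convergence. An alternative route would bypass Dumont's identity and instead translate the defining recurrence $a_{n}^{k}=a_{n}^{k-1}+a_{n+1}^{k-1}$ into a relation between successive row generating functions, but the forward shift $a_{n+1}^{k-1}$ forces an awkward factor of $\tfrac{1}{t}$ together with an extra boundary term $a_{0}^{k-1}$, so the binomial-sum argument above is the cleaner of the two.
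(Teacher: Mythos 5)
Your argument is correct. Note first that the paper itself offers no proof of this proposition: it is stated as a classical result and attributed to Euler, so there is nothing internal to compare against. Your derivation is the standard one and it is complete: substituting Dumont's identity $a_{0}^{n}=\sum_{k=0}^{n}\binom{n}{k}a_{k}^{0}$ (which the paper also states without proof, so you are entitled to use it) into $\overline{a}(t)$, interchanging the order of summation, and evaluating $\sum_{n\ge k}\binom{n}{k}t^{n}=t^{k}/(1-t)^{k+1}$ via the negative-binomial expansion gives exactly $\frac{1}{1-t}a\bigl(\frac{t}{1-t}\bigr)$. Your justification of the interchange and of the composition $t\mapsto t/(1-t)$ is the right one: since $t/(1-t)$ has zero constant term, each coefficient of the result involves only the finitely many terms with $k\le n$, so everything is legitimate in the formal power series ring without any appeal to convergence. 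Your closing remark about the alternative route through the recurrence $a_{n}^{k}=a_{n}^{k-1}+a_{n+1}^{k-1}$ is also accurate; that route works but is messier because of the shift in $n$. No gaps.
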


In the sequel, the generating functions for the columns of $(a_n^k$) will be
denoted by overline.

\subsection{\protect\normalsize \protect\bigskip Hyperharmonic numbers.}

The $n$-th harmonic number is the $n$-th partial sum of the harmonic series: 
\begin{equation*}
H_{n}=\sum_{k=1}^{n}\frac{1}{k}.
\end{equation*}
Let $H_{n}^{(1)}:=H_{n}$, and for all $r>1$ let 
\begin{equation}
H_{n}^{(r)}=\sum_{k=1}^{n}H_{k}^{(r-1)}  \label{Hnrdef}
\end{equation}
be the $n$-th hyperharmonic number of order $r$. By agreement, $%
H_{0}^{(r)}=0 $ for all $r$. These numbers can be expressed by binomial
coefficients and ordinary harmonic numbers: 
\begin{equation*}
H_{n}^{(r)}=\binom{n+r-1}{r-1}(H_{n+r-1}-H_{r-1}).
\end{equation*}

It turned out that the hyperharmonic numbers have many combinatorial
connections. To present these facts, we refer to \cite{BGG} and \cite{CG}.
Present authors gave new closed form for these numbers in \cite{MD}.

\subsection{\protect\normalsize Fibonacci and Lucas numbers.}

The sequence of the Fibonacci numbers is given by the recursion formulae 
\begin{equation*}
F_{n}=F_{n-1}+F_{n-2},\quad (n\geq 2)
\end{equation*}
with initial values $F_{0}=0,\;F_{1}=1$. The Lucas sequence $L_{n}$ has the
same recursion formulae, but $L_{0}=2,\;L_{1}=1$. The numbers $L_{n}$ and $%
F_{n}$ are connected with the formulae 
\begin{equation}
L_{n}=F_{n-1}+F_{n+1},\quad (n\geq 1).  \label{relfibandluc}
\end{equation}

One can read more on these numbers in \cite{CG}, \cite{Koshy}, \cite%
{vorobyov} and \cite{Genfuncology}. Now we cite a general generating
function for Fibonacci numbers from \cite{Koshy} (page 230) which we need
later 
\begin{equation}
\sum_{n=0}^{\infty }F_{kn+r}t^{n}=\frac{F_{r}+\left( -1\right) ^{r}F_{k-r}t}{%
1-L_{k}t+\left( -1\right) ^{k}t^{2}}.  \label{gengenfib}
\end{equation}

We can derive similar generating function for Lucas numbers easily by using (%
\ref{gengenfib}) and (\ref{relfibandluc}): 
\begin{equation}
\sum_{n=0}^{\infty }L_{kn+r}t^{n}=\frac{L_{r}+\left( -1\right) ^{r-1}L_{k-r}t%
}{1-L_{k}t+\left( -1\right) ^{k}t^{2}}.  \label{gengenluc}
\end{equation}

\subsection{\protect\normalsize Incomplete Fibonacci and Incomplete Lucas
numbers.}

The incomplete Fibonacci and incomplete Lucas numbers are defined \cite%
{flipponi} by: 
\begin{equation}
F_{n}(k)=\sum_{j=0}^{k}\binom{n-1-j}{j}\text{, }\left( n=1,2,3,...\text{; }%
0\leq k\leq \left[ \frac{n-1}{2}\right] \right) ;  \label{infibdef}
\end{equation}
\begin{equation*}
L_{n}(k)=\sum_{j=0}^{k}\frac{n}{n-j}\binom{n-j}{j},\left( n=1,2,3,...\text{; 
}0\leq k\leq \left[ \frac{n}{2}\right] \right),
\end{equation*}
where $\left[n\right] $ denotes the integer part of $n$.

The connection between ordinary and incomplete Fibonacci and Lucas numbers
are also given in \cite{flipponi} as 
\begin{equation}
F_{n}(k)=0\text{ \ \ \ \ }0\leq n\leq 2k+1,\text{ \ \ }F_{2k+1}(k)=F_{2k+1,}%
\text{ \ \ }F_{2k+2}(k)=F_{2k+2};  \label{incfib-fib1}
\end{equation}
We also need the following properties of incomplete Fibonacci and Lucas
numbers which are given in \cite{flipponi}: 
\begin{equation}
\sum_{j=0}^{h}\binom{h}{j}F_{n+j}(k+j)=F_{n+2h}(k+h)\text{, }\left( 0\leq
k\leq \frac{n-h-1}{2}\right) ,  \label{binsuminc}
\end{equation}
\begin{equation}
\sum_{j=0}^{h}\binom{h}{j}L_{n+j}(k+j)=L_{n+2h}(k+h)\text{, }\left( 0\leq
k\leq \frac{n-h}{2}\right) .  \label{binsumincL}
\end{equation}

Generating functions of these numbers are given in \cite{pinter-sri}: 
\begin{equation}
R_{k}\left( t\right) =\sum_{j=0}^{\infty }F_{j}\left( k\right) t^{j}=t^{2k+1}%
\frac{\left( F_{2k+1}+F_{2k}t\right) \left( 1-t\right) ^{k+1}-t^{2}}{\left(
1-t\right) ^{k+1}\left( 1-t-t^{2}\right) },  \label{inibgen}
\end{equation}%
\begin{equation}
S_{k}\left( t\right) =\sum_{j=0}^{\infty }L_{j}\left( k\right) t^{j}=t^{2k}%
\frac{\left( L_{2k}+L_{2k-1}t\right) \left( 1-t\right) ^{k+1}-t^{2}\left(
2-t\right) }{\left( 1-t\right) ^{k+1}\left( 1-t-t^{2}\right) }.
\label{inlucgen}
\end{equation}

\section{\protect\normalsize Generating Function of any Row and Column for
the Matrix}

After these introductory steps we are ready to formulate our results.

Now we give general terms and generating functions of any row and column of
the $(a_n^k)$ matrix using the symmetric algorithm.

\begin{proposition}
\label{gen rec prop} If (\ref{sym-rec}) holds then any entries of the matrix 
$(a_n^k)$ is
\end{proposition}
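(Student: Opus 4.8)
The plan is to prove \eqref{sym-gen.t} by induction, using the recurrence \eqref{sym-rec} together with Pascal's rule to drive the inductive step; the formula is understood for $n\geq 1$ and $k\geq 1$, since for these indices the recurrence $a_{n}^{k}=a_{n-1}^{k}+a_{n}^{k-1}$ reduces every entry to entries of strictly smaller index sum $n+k$. Accordingly I would induct on $n+k$.

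For the base cases I would take $n=1$ (arbitrary $k\geq 1$) and $k=1$ (arbitrary $n\geq 1$). These follow by simply unrolling the one-dimensional recurrences: iterating $a_{n}^{1}=a_{n-1}^{1}+a_{n}^{0}$ down to $a_{0}^{1}$ gives $a_{n}^{1}=a_{0}^{1}+\sum_{j=1}^{n}a_{j}^{0}$, which is exactly \eqref{sym-gen.t} with $k=1$ (the binomial coefficients collapse to $1$); symmetrically, iterating $a_{1}^{k}=a_{0}^{k}+a_{1}^{k-1}$ yields $a_{1}^{k}=\sum_{i=1}^{k}a_{0}^{i}+a_{1}^{0}$, matching \eqref{sym-gen.t} with $n=1$.

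For the inductive step, assume \eqref{sym-gen.t} holds for $a_{n-1}^{k}$ and for $a_{n}^{k-1}$ (both have index sum one less than that of $a_{n}^{k}$), substitute the two instances of the formula into $a_{n}^{k}=a_{n-1}^{k}+a_{n}^{k-1}$, and compare coefficients of each initial entry. For a bulk first-column entry $a_{0}^{i}$ with $1\leq i\leq k-1$ the two contributing coefficients are $\binom{n+k-i-2}{n-2}$ and $\binom{n+k-i-2}{n-1}$, which add by Pascal's rule to the desired $\binom{n+k-i-1}{n-1}$; likewise a bulk first-row entry $a_{j}^{0}$ with $1\leq j\leq n-1$ receives $\binom{k+n-j-2}{k-2}+\binom{k+n-j-2}{k-1}=\binom{k+n-j-1}{k-1}$.

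The point that needs care---and the only real obstacle---is the bookkeeping at the two endpoints, where the summation ranges coming from $a_{n-1}^{k}$ and $a_{n}^{k-1}$ differ. The entry $a_{0}^{k}$ appears only in the expansion of $a_{n-1}^{k}$, with coefficient $\binom{n-2}{n-2}=1$, which is precisely the target $\binom{n-1}{n-1}=1$; similarly $a_{n}^{0}$ appears only in the expansion of $a_{n}^{k-1}$, again with coefficient $1$, matching the target. Checking these two degenerate terms separately closes the induction. As a sanity check one may also read \eqref{sym-gen.t} combinatorially: reversing the recurrence expresses $a_{n}^{k}$ as a sum over monotone lattice paths from the boundary to $(n,k)$, and $\binom{n+k-i-1}{n-1}$ (respectively $\binom{k+n-j-1}{k-1}$) counts exactly the paths whose first boundary contact is the first-column cell at height $i$ (respectively the first-row cell in column $j$).
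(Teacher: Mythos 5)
Your proof is correct and takes exactly the approach the paper intends: the paper's own proof is the single line ``Easy to prove it by considering (\ref{sym-rec}) with induction,'' and your argument is precisely that induction, carried out in full with Pascal's rule handling the bulk terms and the two boundary entries $a_0^k$, $a_n^0$ checked separately. All the coefficient computations check out, so you have supplied the details the paper omits.
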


\label{pge}

\begin{equation}
a_{n}^{k}=\sum_{i=1}^{k}\binom{n+k-i-1}{n-1}a_{0}^{i}+\sum_{j=1}^{n}\binom{%
k+n-j-1}{k-1}a_{j}^{0}.  \label{genentry}
\end{equation}

\begin{proof}
Easy to prove it by considering (\ref{sym-rec}) with induction.
\end{proof}

\begin{theorem}
\label{thegengenf}Let $a_{n}^{0}$ and $a_{0}^{n}$ be two initial sequences.
Then the generating functions of the $k$th row and $n$th column of $(a_n^k)$
are 
\begin{equation}
^{k}a\left( t\right) =\sum_{n=1}^{\infty }a_{n}^{k}t^{n}=\frac{1}{\left(
1-t\right) ^{k}}\left\{ ^{0}a\left( t\right) +\frac{t}{1-t}%
\sum_{r=1}^{k}a_{0}^{r}\left( 1-t\right) ^{r}\right\} ,  \label{gensym-row}
\end{equation}
\noindent and 
\begin{equation}
^{n}\overline{a\left( t\right) }=\sum_{k=1}^{\infty }a_{n}^{k}t^{k}=\frac{1}{%
\left( 1-t\right) ^{n}}\left\{ ^{0}\overline{a\left( t\right) }+\frac{t}{1-t}%
\sum_{j=1}^{n}a_{j}^{0}\left( 1-t\right) ^{j}\right\} .  \label{gesym-column}
\end{equation}
\noindent
\end{theorem}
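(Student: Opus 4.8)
The plan is to derive both generating functions directly from the explicit entry formula \eqref{genentry}, treating the row generating function $^{k}a(t)$ first and then obtaining the column version $^{n}\overline{a(t)}$ by the symmetry of the setup. Fixing the row index $k$, I would substitute \eqref{genentry} into $^{k}a(t)=\sum_{n=1}^{\infty}a_{n}^{k}t^{n}$ and split the sum into the two pieces coming from the first-column contribution $\sum_{i=1}^{k}\binom{n+k-i-1}{n-1}a_{0}^{i}$ and the first-row contribution $\sum_{j=1}^{n}\binom{k+n-j-1}{k-1}a_{j}^{0}$. The goal is to recognize each piece as a known power series.

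For the first piece, after interchanging the order of summation over $n$ and $i$, the inner sum over $n$ is $\sum_{n=1}^{\infty}\binom{n+k-i-1}{n-1}t^{n}$, which I would evaluate using the negative binomial (generalized binomial) series $\sum_{m\ge 0}\binom{m+p}{m}t^{m}=(1-t)^{-(p+1)}$ after shifting the index $n-1=m$. This should produce a factor of the form $t(1-t)^{-(k-i+1)}$, and summing the result against $a_{0}^{i}$ over $i$ from $1$ to $k$ is exactly what yields the term $\frac{1}{(1-t)^{k}}\cdot\frac{t}{1-t}\sum_{r=1}^{k}a_{0}^{r}(1-t)^{r}$ in \eqref{gensym-row}, once the powers of $(1-t)$ are collected. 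For the second piece I would again swap the summation order, this time summing over $n\ge j$ with the binomial coefficient $\binom{k+n-j-1}{k-1}$; the same generalized binomial series (now with the fixed parameter $k-1$) collapses the inner sum into $(1-t)^{-k}$ times $t^{j}$, so summing over $j$ reconstructs $\sum_{j\ge 1}a_{j}^{0}t^{j}=\,^{0}a(t)$ scaled by $(1-t)^{-k}$. Combining the two pieces gives \eqref{gensym-row}.

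The column formula \eqref{gesym-column} follows by the same computation with the roles of $n$ and $k$ (equivalently rows and columns) interchanged, since the recurrence \eqref{sym-rec} and the closed form \eqref{genentry} are symmetric in $(n,k)$ and in the two initial sequences $(a_{n}^{0})$ and $(a_{0}^{n})$. I would state this explicitly rather than repeat the algebra: replacing $k\leftrightarrow n$, $a_{0}^{i}\leftrightarrow a_{j}^{0}$, and $^{0}a(t)\leftrightarrow\,^{0}\overline{a(t)}$ in the derivation above turns \eqref{gensym-row} into \eqref{gesym-column}.

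The main obstacle I anticipate is bookkeeping with the indices rather than any conceptual difficulty: the two binomial sums have different fixed parameters (the upper entry shifts with $i$ in the first, while the lower entry is the fixed $k-1$ in the second), so I must be careful to shift the summation variable correctly in each case and to justify the interchange of the two infinite/finite summations. The interchange is legitimate because, for $|t|<1$, everything converges absolutely (the binomial series has radius of convergence $1$ and the inner $i$- and $j$-sums are finite), so Fubini/Tonelli applies termwise. Getting the exponents of $(1-t)$ and the single factor of $t$ to line up precisely with the stated right-hand sides is the only place where a sign or an off-by-one slip could creep in.
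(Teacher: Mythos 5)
Your proposal is correct and follows essentially the same route as the paper: both derive \eqref{gensym-row} by substituting the explicit entry formula \eqref{genentry} into the row generating function, separating the first-column and first-row contributions, and collapsing each via the negative binomial series $\sum_{m\ge 0}\binom{m+p}{m}t^{m}=(1-t)^{-(p+1)}$ (the paper packages the first-row piece as a Cauchy product rather than an explicit interchange of summation, but this is the same computation). The paper likewise dispatches \eqref{gesym-column} by declaring it "similar," exactly as you do via the $(n,k)$ symmetry.
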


\begin{proof}
We prove just the first equation, the second is similar. From (\ref{genentry}%
), 
\begin{equation*}
\sum_{n=0}^{\infty }a_{n+1}^{k+1}t^{n} =\sum_{n=0}^{\infty }\left\{
\sum_{r=1}^{k+1}\binom{n+k+1-r}{n}a_{0}^{r}+\sum_{j=1}^{n+1}\binom{k+n+1-j}{%
k }a_{j}^{0}\right\} t^{n}
\end{equation*}
\begin{equation*}
=a_{0}^{1}\sum_{n=0}^{\infty }\binom{n+k}{k}t^{n}+\sum_{r=1}^{k}a_{0}^{r+1}%
\sum_{n=0}^{\infty }\binom{n+k-r}{n}t^{n}+\sum_{n=0}^{\infty
}a_{n+1}^{0}t^{n}\sum_{n=0}^{\infty }\binom{k+n}{k} t^{n}
\end{equation*}
\begin{equation*}
=\sum_{n=0}^{\infty }\binom{n+k}{k}t^{n}\left\{ a_{0}^{1}+\sum_{n=0}^{\infty
}a_{n+1}^{0}t^{n}\right\}+\sum_{r=1}^{k}a_{0}^{r+1}\sum_{n=0}^{\infty }%
\binom{n+k-r}{n}t^{n}.
\end{equation*}
Then 
\begin{equation*}
\sum_{n=1}^{\infty }a_{n}^{k+1}t^{n}=\sum_{n=0}^{\infty }\binom{n+k}{k}%
t^{n}\left\{ a_{0}^{1}t+^{0}a\left( t\right) \right\}
+\sum_{r=1}^{k}a_{0}^{r+1}t\sum_{n=0}^{\infty }\binom{n+k-r}{k-r}t^{n}.
\end{equation*}
If we write related series in terms of Newton's binomial series we get 
\begin{equation*}
\sum_{n=1}^{\infty }a_{n}^{k+1}t^{n}=\frac{1}{\left( 1-t\right) ^{k+1}}%
\left\{ ^{0}a\left( t\right) +\sum_{r=0}^{k}a_{0}^{r+1}t\left( 1-t\right)
^{r}\right\} .
\end{equation*}
The last equation gives the statement.
\end{proof}

\section{\protect\normalsize Applications}

Here we obtain some results on hyperharmonic-, ordinary Fibonacci- and Lucas
numbers using the algorithm have introduced.

\subsection{\protect\normalsize Application for Hyperharmonic Numbers}

We start with two suitable initial sequences for hyperharmonic numbers.

Let $a_{n}^{0}=\frac{1}{n+1}$ and $a_{0}^{n}=1$, $n\geq 1$ be given. If we
calculate the elements of the matrix $(a_n^k)$ with the recursive formula %
\eqref{sym-rec}, it turns out that it equals to 
\begin{equation}
\left( 
\begin{array}{ccccc}
H_{1}^{(0)} & H_{2}^{(0)} & H_{3}^{(0)} & H_{4}^{(0)} & \cdots \\ 
H_{1}^{(1)} & H_{2}^{(1)} & H_{3}^{(1)} & H_{4}^{(1)} & \cdots \\ 
H_{1}^{(2)} & H_{2}^{(2)} & H_{3}^{(2)} & H_{4}^{(2)} & \cdots \\ 
\vdots & \vdots & \vdots & \vdots & \ddots%
\end{array}%
\right)  \label{hypmat}
\end{equation}
Here $H_{n}^{(0)}=\frac{1}{n},n\geq 1.$

Now we are ready to get the well-known generating function of hyperharmonic
numbers with our method as a corollary of Theorem \ref{thegengenf}.

\begin{corollary}
We have 
\begin{equation*}
\sum_{n=1}^{\infty }H_{n}^{(k)}t^{n}=-\frac{\ln\left( 1-t\right) }{\left(
1-t\right) ^{k}}.
\end{equation*}
\end{corollary}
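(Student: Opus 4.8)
The plan is to apply Theorem \ref{thegengenf} directly to the specific initial sequences identified for the hyperharmonic matrix, namely $a_n^0 = \frac{1}{n+1}$ and $a_0^n = 1$ for $n \geq 1$. Since the excerpt establishes that the matrix $(a_n^k)$ built from these initial sequences has $a_n^k = H_n^{(k)}$, the row generating function $^k a(t) = \sum_{n=1}^\infty a_n^k t^n$ is precisely $\sum_{n=1}^\infty H_n^{(k)} t^n$, so it suffices to specialize formula \eqref{gensym-row} and simplify.

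First I would compute the two ingredients that feed into \eqref{gensym-row}. The inner sum $\sum_{r=1}^k a_0^r (1-t)^r$ collapses, since $a_0^r = 1$ for all $r \geq 1$, to the finite geometric sum $\sum_{r=1}^k (1-t)^r$. Next I would identify $^0 a(t) = \sum_{n=1}^\infty a_n^0 t^n = \sum_{n=1}^\infty \frac{1}{n+1} t^n$; recognizing $\sum_{m=1}^\infty \frac{t^m}{m} = -\ln(1-t)$, one shifts the index to obtain $^0 a(t) = \frac{1}{t}\bigl(-\ln(1-t) - t\bigr) = -\frac{\ln(1-t)}{t} - 1$. These are the standard evaluations of the logarithmic and geometric series, so neither should present difficulty.

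The main work, and the step I expect to be the only real obstacle, is the algebraic simplification: substituting both ingredients into the braces of \eqref{gensym-row} and checking that the finite geometric sum exactly cancels the spurious constant terms, leaving only $-\ln(1-t)$ inside the braces. Concretely, the factor $\frac{t}{1-t}\sum_{r=1}^k (1-t)^r$ must combine with the $-1$ coming from $^0 a(t)$ so that all polynomial-in-$t$ contributions vanish and only the logarithmic term survives before dividing by $(1-t)^k$. I would carry out the geometric summation $\sum_{r=1}^k (1-t)^r = \frac{(1-t)\bigl(1-(1-t)^k\bigr)}{t}$ and verify that $\frac{t}{1-t}\cdot\frac{(1-t)\bigl(1-(1-t)^k\bigr)}{t} = 1 - (1-t)^k$, then track how the leftover $(1-t)^k$ term interacts with the overall prefactor.

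Once that cancellation is confirmed, the braces reduce to $-\ln(1-t)$ (modulo terms that the $\frac{1}{(1-t)^k}$ prefactor resolves), and dividing by $(1-t)^k$ yields exactly $-\frac{\ln(1-t)}{(1-t)^k}$, which is the claimed generating function. The whole argument is thus a clean specialization of the already-proven Theorem \ref{thegengenf}, with the delicate bookkeeping confined to matching up the constant and geometric pieces so that only the logarithm remains.
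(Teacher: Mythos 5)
Your overall strategy is the same as the paper's---specialize Theorem \ref{thegengenf} to $a_{n}^{0}=\frac{1}{n+1}$, $a_{0}^{n}=1$, evaluate $^{0}a(t)=-\frac{\ln(1-t)}{t}-1$ and $\frac{t}{1-t}\sum_{r=1}^{k}(1-t)^{r}=1-(1-t)^{k}$, and simplify---and those two ingredient computations are correct. But there is a genuine gap at the point where you identify the row generating function with $\sum_{n=1}^{\infty}H_{n}^{(k)}t^{n}$. Look at the matrix \eqref{hypmat}: since $a_{n}^{0}=\frac{1}{n+1}=H_{n+1}^{(0)}$ and $a_{0}^{k}=1=H_{1}^{(k)}$, the entries satisfy $a_{n}^{k}=H_{n+1}^{(k)}$, not $H_{n}^{(k)}$. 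Hence
\begin{equation*}
^{k}a(t)=\sum_{n=1}^{\infty}a_{n}^{k}t^{n}=\sum_{n=1}^{\infty}H_{n+1}^{(k)}t^{n}=\frac{1}{t}\sum_{n=2}^{\infty}H_{n}^{(k)}t^{n},
\end{equation*}
which is off from your target by a factor of $t$ and by the missing $n=1$ term. This is not a cosmetic issue: carrying out your own substitution, the braces become $-\frac{\ln(1-t)}{t}-1+1-(1-t)^{k}=-\frac{\ln(1-t)}{t}-(1-t)^{k}$, so that $^{k}a(t)=-\frac{\ln(1-t)}{t(1-t)^{k}}-1$. The braces do \emph{not} reduce to $-\ln(1-t)$, and the leftover $\frac{1}{t}$ and $-1$ are not ``resolved by the prefactor'' as you assert---that sentence papers over exactly the place where the argument breaks.

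The repair is short and is essentially what the paper does: multiply through by $t$ to get $\sum_{n=2}^{\infty}H_{n}^{(k)}t^{n}=-\frac{\ln(1-t)}{(1-t)^{k}}-t$, and then restore the $n=1$ term using $H_{1}^{(k)}=1$ for all $k$, so that adding $H_{1}^{(k)}t=t$ kills the $-t$ and yields $\sum_{n=1}^{\infty}H_{n}^{(k)}t^{n}=-\frac{\ln(1-t)}{(1-t)^{k}}$. With that index bookkeeping made explicit, your proof coincides with the paper's.
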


\begin{proof}
In Theorem \ref{thegengenf} by taking $a_{n}^{0}=\frac{1}{n+1}$ and $%
a_{0}^{n}=1$, ($n\geq 1$) one can easily get 
\begin{equation*}
^{k}a\left( t\right) =\sum_{n=2}^{\infty }H_{n}^{(k)}t^{n}=\frac{t}{\left(
1-t\right) ^{k}}\left\{ ^{0}a\left( t\right) +\frac{t}{1-t}%
\sum_{r=1}^{k}\left( 1-t\right) ^{r}\right\} .
\end{equation*}
From the identities 
\begin{equation*}
^{0}a\left( t\right) =-\frac{\ln\left( 1-t\right) }{t}-1,
\end{equation*}
and 
\begin{equation*}
\sum_{r=1}^{k}\left( 1-t\right) ^{r}=\frac{\left( 1-t\right) }{t}\left\{
1-\left( 1-t\right) ^{k}\right\} ,
\end{equation*}%
we can write, 
\begin{equation*}
\sum_{n=2}^{\infty }H_{n}^{(k)}t^{n}=\frac{t}{\left( 1-t\right) ^{k}}\left\{
-\frac{\ln\left( 1-t\right) }{t}-\left( 1-t\right) ^{k}\right\} =-\frac{%
\ln\left( 1-t\right) }{\left( 1-t\right) ^{k}}-t.
\end{equation*}
It completes the proof.
\end{proof}

Next theorem indicates relation between binomial coefficients and
hyperharmonic numbers. In \cite{BGG}, authors gave combinatorial proof of
this statement. Now we will prove by the symmetric algorithm.

\begin{theorem}
Let $n\geq 1$, $k\geq 1$. Then%
\begin{equation*}
H_{n}^{(k)}=\sum_{j=1}^{n}\binom{n+k-j-1}{k-1}\frac{1}{j}.
\end{equation*}
\end{theorem}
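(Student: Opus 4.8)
The plan is to recognize that the hyperharmonic numbers are themselves an instance of the symmetric algorithm, and then read off the desired identity directly from the general entry formula \eqref{genentry}. The key observation is that the defining recursion \eqref{Hnrdef} telescopes: since $H_n^{(k)}=\sum_{m=1}^{n}H_m^{(k-1)}$, splitting off the last term gives $H_n^{(k)}=H_{n-1}^{(k)}+H_n^{(k-1)}$, which is exactly the symmetric recurrence \eqref{sym-rec} for the choice $a_n^k=H_n^{(k)}$.

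First I would identify the two initial sequences for this choice. The first row is $a_n^0=H_n^{(0)}=\tfrac{1}{n}$ for $n\geq 1$, and the first column is $a_0^k=H_0^{(k)}=0$ for every $k$, by the convention $H_0^{(r)}=0$ recorded after \eqref{Hnrdef}. Note that \eqref{genentry} references only $a_0^i$ with $i\geq 1$ and $a_j^0$ with $j\geq 1$, so the (otherwise ambiguous) corner value $a_0^0$ never enters and needs no interpretation.

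Then I would substitute these values into \eqref{genentry}. Because $a_0^i=0$ for all $i\geq 1$, the entire first sum $\sum_{i=1}^{k}\binom{n+k-i-1}{n-1}a_0^i$ collapses to zero, and the second sum becomes $\sum_{j=1}^{n}\binom{k+n-j-1}{k-1}\tfrac{1}{j}$. Using the symmetry $\binom{k+n-j-1}{k-1}=\binom{n+k-j-1}{k-1}$, this is precisely the claimed right-hand side, completing the proof.

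There is essentially no computational obstacle; the whole argument rests on the single structural observation that hyperharmonic numbers obey the symmetric recurrence. The one point demanding care is the boundary bookkeeping: I must confirm that the vanishing of the first column $a_0^k$ is legitimate (it is, by the stated convention) so that the first sum in \eqref{genentry} genuinely drops out, and that the index range of \eqref{genentry} matches the summation range $1\le j\le n$ in the statement. An alternative, should one prefer the initial data $a_n^0=\tfrac{1}{n+1}$, $a_0^n=1$ used for \eqref{hypmat} (which instead encodes $a_n^k=H_{n+1}^{(k)}$), is available but forces an index shift and an extra binomial simplification; the cleaner route above avoids this entirely.
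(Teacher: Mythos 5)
Your proof is correct, and it rests on the same key lemma as the paper --- the general entry formula \eqref{genentry} --- but instantiated differently, which changes the computation substantially. The paper feeds in the initial data $a_n^0=\tfrac{1}{n+1}$, $a_0^n=1$ from the matrix \eqref{hypmat}, so that $a_n^k=H_{n+1}^{(k)}$; the first sum in \eqref{genentry} is then a nontrivial block of binomial coefficients that must be evaluated with the hockey-stick identity $\sum_{t=a}^{b}\binom{t}{a}=\binom{b+1}{a+1}$, absorbed into the second sum, and finally the whole thing reindexed (the shift $a_{n-1}^{k}=H_n^{(k)}$) to reach the stated identity. You instead observe that \eqref{Hnrdef} telescopes to $H_n^{(k)}=H_{n-1}^{(k)}+H_n^{(k-1)}$, so $a_n^k=H_n^{(k)}$ itself satisfies \eqref{sym-rec} with first row $\tfrac{1}{n}$ and first column identically zero; the first sum in \eqref{genentry} then vanishes outright and the identity is read off with no binomial manipulation and no index shift. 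Your route is cleaner and more self-contained; what the paper's choice buys is reuse of the matrix \eqref{hypmat} already constructed in the text (and it sidesteps the undefined corner entry $H_0^{(0)}$, which you correctly note never enters \eqref{genentry} anyway). Both arguments are valid.
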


\begin{proof}
Let $a_{n}^{0}=\frac{1}{n+1}$ and $a_{0}^{n}=1,$ ($n\geq 1$). From (\ref%
{genentry}), 
\begin{eqnarray*}
a_{n+1}^{k+1} &=&\sum_{i=1}^{k+1}\binom{n+k-i+1}{n}+\sum_{j=1}^{n+1}\binom{%
k+n-j+1}{k}\frac{1}{j+1} \\
&=&\sum_{i=0}^{k}\binom{n+k-i}{n}+\sum_{j=0}^{n}\binom{k+n-j}{k}\frac{1}{j+2}
\\
&=&\sum_{r=0}^{k}\binom{n+r}{n}+\sum_{s=0}^{n}\binom{k+s}{k}\frac{1}{n-s+2},
\end{eqnarray*}
where $k-i=r$ and $n-j=s$. From \cite[page 160]{GKP}

\begin{equation*}
\sum_{t=a}^{b}\binom{t}{a}=\binom{b+1}{a+1}.
\end{equation*}
Hence 
\begin{equation*}
a_{n+1}^{k+1}=\binom{k+n+1}{n+1}+\sum_{s=0}^{n}\binom{k+s}{k}\frac{1}{n-s+2}%
=\sum_{s=0}^{n+1}\binom{k+s}{k}\frac{1}{n-s+2}.
\end{equation*}
Then \eqref{hypmat} yields 
\begin{equation*}
a_{n-1}^{k}=H_{n}^{(k)}=\sum_{s=0}^{n-1}\binom{k+s-1}{k-1}\frac{1}{n-s},
\end{equation*}
this completes the proof.
\end{proof}

\subsection{\protect\normalsize Applications for the Ordinary Fibonacci and
Lucas Numbers}

We point out that the symmetric algorithm is quite applicable for ordinary
Fibonacci and Lucas numbers. By starting with two different initial
sequences we get an application which gives us new identities.

Now we consider the initial sequences $a_{n}^{0}=F_{n-1}$ and $%
a_{0}^{n}=F_{2n-1}$, $n\geq 1$. This special case gives the following matrix:

\begin{equation}
\left( 
\begin{array}{ccccc}
0 & F_{0} & F_{1} & F_{2} & \cdots \\ 
F_{1} & F_{2} & F_{3} & F_{4} & \cdots \\ 
F_{3} & F_{4} & F_{5} & F_{6} & \cdots \\ 
F_{5} & F_{6} & F_{7} & F_{8} & \cdots \\ 
\vdots & \vdots & \vdots & \vdots & \ddots%
\end{array}%
\right) .  \label{fibmat}
\end{equation}
One can consider same matrix for the Lucas numbers just by substitution $%
F_{n}$ with $L_{n}$.

We prove some famous relations \cite{Koshy} for Fibonacci and Lucas numbers
with our method.

\begin{proposition}
\label{probfibeq} The following equalities hold%
\begin{equation}
F_{2n}=\sum_{i=1}^{n}F_{2i-1}\text{ and }\sum_{i=1}^{n}F_{i}=F_{n+2}-1
\label{helpfib}
\end{equation}%
and%
\begin{equation}
L_{2n}-2=\sum_{i=1}^{n}L_{2i-1}\text{ and }\sum_{i=0}^{n}L_{i}=L_{n+2}-1.
\label{helpluc}
\end{equation}
\end{proposition}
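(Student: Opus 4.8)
The plan is to read all four identities directly off the matrix (\ref{fibmat}) by exploiting the single defining relation (\ref{sym-rec}), namely $a_n^k = a_{n-1}^k + a_n^{k-1}$, which says that each entry is the sum of its left neighbour and its upper neighbour. The first thing I would record is that, for the Fibonacci choice $a_n^0 = F_{n-1}$, $a_0^n = F_{2n-1}$, every entry of (\ref{fibmat}) is $a_n^k = F_{n+2k-1}$; this is immediate, since the proposed value satisfies $F_{n+2k-1} = F_{n+2k-2} + F_{n+2k-3}$, which is exactly (\ref{sym-rec}) fed by the Fibonacci recurrence, and it agrees with the prescribed first row and first column. The Lucas matrix is identical with $F$ replaced by $L$, so there $a_n^k = L_{n+2k-1}$.

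The engine for all four formulas is that (\ref{sym-rec}) turns the second row (resp.\ second column) into the running sum of the first row (resp.\ first column). Concretely, rewriting (\ref{sym-rec}) as $a_0^k = a_1^k - a_1^{k-1}$ and telescoping over $k=1,\dots,n$ gives $\sum_{k=1}^n a_0^k = a_1^n - a_1^0$. For the Fibonacci matrix the left side is $\sum_{k=1}^n F_{2k-1}$, while $a_1^n = F_{2n}$ and $a_1^0 = F_0 = 0$, which yields $\sum_{i=1}^n F_{2i-1} = F_{2n}$. For the consecutive-sum identity I would instead rewrite (\ref{sym-rec}) as $a_n^0 = a_n^1 - a_{n-1}^1$, shift the first-row index via $F_i = a_{i+1}^0$, and telescope over the column index to obtain $\sum_{i=1}^n F_i = \sum_{j=2}^{n+1} a_j^0 = a_{n+1}^1 - a_1^1 = F_{n+2} - 1$.

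The two Lucas identities fall out of the very same two telescopings applied to the Lucas matrix; the only thing that changes is the surviving boundary term. In the odd-index sum the correction is $a_1^0 = L_0 = 2$ in place of $F_0 = 0$, producing $\sum_{i=1}^n L_{2i-1} = L_{2n} - 2$; in the consecutive sum, extending the range to start at $i=0$ and telescoping $\sum_{j=1}^{n+1} a_j^0 = a_{n+1}^1 - a_0^1$ with $a_0^1 = L_1 = 1$ gives $\sum_{i=0}^n L_i = L_{n+2} - 1$.

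Every step here is an elementary telescoping, so the only place demanding care — and the main obstacle — is the index bookkeeping: keeping straight that $a_n^k$ sits in row $k$ and column $n$, handling the offsets built into the initial data ($F_{n-1}$ and $F_{2n-1}$), and correctly evaluating the single boundary entry that survives each telescope ($a_1^0$, $a_1^1$, $a_0^1$). It is precisely these boundary values that account for the constants $0$, $-1$ and $-2$ distinguishing the four formulas, so matching them is the part worth checking against small cases.
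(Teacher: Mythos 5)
Your proof is correct and takes essentially the same route as the paper: identify the entries of the matrix (\ref{fibmat}) as shifted Fibonacci (resp.\ Lucas) numbers and equate that closed form with the sum the algorithm produces along the second column/row --- your telescoping of (\ref{sym-rec}) is precisely the $n=1$ (resp.\ $k=1$) specialization of the paper's general entry formula (\ref{genentry}). You are in fact more complete than the paper, which only writes out the first identity of (\ref{helpfib}) in detail and leaves the rest to the same device.
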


\begin{proof}
Here, we consider only (\ref{helpfib}). (\ref{helpluc}) can be proven
similarly.

For $a_{n}^{0}=F_{n-1}$ and $a_{0}^{n}=F_{2n-1}$, $n\geq 1$ we can write $%
a_{1}^{1}=F_{2},\;a_{1}^{2}=F_{4}$, and by induction $a_{1}^{n}=F_{2n}$. %
\eqref{sym-gen.t} implies that 
\begin{equation*}
a_{1}^{n}=F_{0}+\sum_{i=1}^{n}F_{2i-1}.
\end{equation*}%
These prove (\ref{helpfib}).
\end{proof}

The generating function of the first row or the first column is well-known.
We obtain generating function of any row or column and some interesting
results of them. For the sake of simplicity, let us denote the quantities

\begin{equation}
A_{n,k}:=\sum_{i=0}^{k-1}\binom{n+k-i-2}{n-1}F_{2i+1},\quad
B_{n,k}:=\sum_{i=0}^{n-1}\binom{n+k-i-2}{k-1}F_{i}.  \label{An,k}
\end{equation}

\begin{proposition}
\label{propfib1}For the values $a_{n}^{0}=F_{n-1}$ and $a_{0}^{n}=F_{2n-1}$,
($n\geq 1$), we have%
\begin{equation}
^{k}a\left( t\right) =\sum_{n=1}^{\infty }\left( A_{n,k}+B_{n,k}\right)
t^{n}=\frac{t\left\{ F_{2k}+tF_{2k-1}\right\} }{1-t-t^{2}}  \label{symgenfib}
\end{equation}%
and 
\begin{equation}
^{n}\overline{a\left( t\right) }=\sum_{k=1}^{\infty }\left(
A_{n,k}+B_{n,k}\right) t^{k}=\frac{t\left( F_{n+1}-tF_{n-1}\right) }{%
t^{2}-3t+1}.  \label{symgenfibII}
\end{equation}
\end{proposition}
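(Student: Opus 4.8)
The plan is to read both generating functions straight off Theorem~\ref{thegengenf}, reducing the whole argument to evaluating the three ingredients that theorem requires for the present data $a_{n}^{0}=F_{n-1}$ and $a_{0}^{n}=F_{2n-1}$. First I would note that the summand $A_{n,k}+B_{n,k}$ is exactly the entry $a_{n}^{k}$: substituting these initial values into the general entry formula~\eqref{genentry} and reindexing the two sums by $i\mapsto i-1$ and $j\mapsto j-1$ reproduces precisely the quantities $A_{n,k}$ and $B_{n,k}$ of~\eqref{An,k}. Hence proving~\eqref{symgenfib} and~\eqref{symgenfibII} is the same as evaluating $^{k}a(t)$ and $^{n}\overline{a(t)}$ in closed form.

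For the row function I would first compute the two ingredients in~\eqref{gensym-row}. The initial-row series $^{0}a(t)=\sum_{n\geq 1}F_{n-1}t^{n}$ equals $t^{2}/(1-t-t^{2})$ from the ordinary Fibonacci generating function, and the initial-column series $^{0}\overline{a(t)}=\sum_{k\geq 1}F_{2k-1}t^{k}$ equals $t(1-t)/(1-3t+t^{2})$ by~\eqref{gengenfib} with $k=2$, $r=1$. The remaining piece is the finite weighted sum $\sum_{r=1}^{k}F_{2r-1}(1-t)^{r}$; I would put it in closed form by the telescoping device for linear recurrences. Since $F_{2r-1}$ obeys $g_{r}=3g_{r-1}-g_{r-2}$, multiplying the partial sum by the characteristic polynomial $1-3u+u^{2}$ in the variable $u=1-t$ annihilates every interior term and leaves only boundary contributions of orders $u^{1},u^{2},u^{k+1},u^{k+2}$, where the top terms simplify using $g_{k-1}-3g_{k}=-g_{k+1}=-F_{2k+1}$. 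The column function is handled identically, with the ordinary recurrence and the polynomial $1-u-u^{2}$ applied to $\sum_{j=1}^{n}F_{j-1}(1-t)^{j}$.

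The decisive algebraic fact is that the substitution $u=1-t$ sends $1-3u+u^{2}$ to $-(1-t-t^{2})$ and $1-u-u^{2}$ to $-(t^{2}-3t+1)$, which is exactly why the two target denominators arise. Feeding the closed forms back into Theorem~\ref{thegengenf}, I expect the low-order boundary terms of the telescoped sum to cancel $^{0}a(t)$ (respectively $^{0}\overline{a(t)}$), while the remaining boundary terms carry a factor $(1-t)^{k}$ (respectively $(1-t)^{n}$) that precisely cancels the prefactor $1/(1-t)^{k}$ (respectively $1/(1-t)^{n}$). A final application of $F_{2k+1}-F_{2k-1}=F_{2k}$ and $F_{n}+F_{n-1}=F_{n+1}$ then rewrites the surviving numerators as $t(F_{2k}+tF_{2k-1})$ and $t(F_{n+1}-tF_{n-1})$, as claimed.

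The main obstacle is bookkeeping rather than conceptual: getting the signs and boundary terms of the telescoped partial sums exactly right, and tracking the powers of $(1-t)$ carefully enough to witness the clean cancellation against the prefactor. I would guard against slips by checking the base cases $k=1$ and $n=1$ against the matrix~\eqref{fibmat}, where the row reduces to $\sum_{n\geq 1}F_{n+1}t^{n}$ and the column to $\sum_{k\geq 1}F_{2k}t^{k}$, both of which follow at once from~\eqref{gengenfib}.
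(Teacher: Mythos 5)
Your proposal is correct and follows the paper's overall skeleton exactly: identify $A_{n,k}+B_{n,k}$ with the entry $a_n^k$ via \eqref{genentry}, then read both generating functions off Theorem~\ref{thegengenf}. The one place where you genuinely diverge is the evaluation of the finite weighted sums $\sum_{r=1}^{k}F_{2r-1}(1-t)^{r}$ and $\sum_{j=1}^{n}F_{j-1}(1-t)^{j}$. The paper writes the first of these as a difference of two \emph{infinite} series, $\sum_{r=1}^{\infty}-\sum_{r=k+1}^{\infty}$, evaluates each via the closed form \eqref{gengenfib} at the argument $1-t$, and then needs the negative-index identity $F_{-n}=(-1)^{n+1}F_n$ to clean up the tail. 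Your characteristic-polynomial telescoping (multiplying the partial sum by $1-3u+u^2$, resp.\ $1-u-u^2$, in $u=1-t$) reaches the identical closed form $\frac{(1-t)\{t-(1-t)^k(F_{2k}+tF_{2k-1})\}}{t^2+t-1}$ by a purely finite algebraic computation; I checked the boundary terms ($g_1u$, $(g_2-3g_1)u^2$, $(g_{k-1}-3g_k)u^{k+1}=-F_{2k+1}u^{k+1}$, $g_ku^{k+2}$) and the subsequent cancellations against $^{0}a(t)$ and the prefactor $(1-t)^{-k}$, and they work out exactly as you describe, likewise for the column case. What your route buys is the avoidance of any convergence question about substituting $1-t$ into a power series and of the extension to negative Fibonacci indices; what the paper's route buys is brevity, since it reuses the already-quoted formula \eqref{gengenfib} instead of redoing a telescoping computation. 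Your sanity checks at $k=1$ and $n=1$ against the matrix \eqref{fibmat} are a sensible addition the paper does not make.
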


\begin{proof}
From (\ref{gensym-row}), 
\begin{equation*}
^{k}a\left( t\right) =\frac{1}{\left( 1-t\right) ^{k}}\left\{ ^{0}a\left(
t\right) +\frac{t}{1-t}\sum_{r=1}^{k}F_{2r-1}\left( 1-t\right) ^{r}\right\} .
\end{equation*}%
Considering (\ref{gengenfib}), 
\begin{equation*}
^{0}a\left( t\right) =\sum_{n=1}^{\infty }F_{n-1}t^{n}=\frac{t^{2}}{1-t-t^{2}%
}\text{,}
\end{equation*}%
and 
\begin{eqnarray*}
\sum_{r=1}^{k}F_{2r-1}\left( 1-t\right) ^{r} &=&\sum_{r=1}^{\infty
}F_{2r-1}\left( 1-t\right) ^{r}-\sum_{r=k+1}^{\infty }F_{2r-1}\left(
1-t\right) ^{r} \\
&=&\frac{\left( 1-t\right) t-\left( 1-t\right) ^{k+1}\left\{ F_{2k+1}-\left(
1-t\right) F_{1-2k}\right\} }{t^{2}+t-1}.
\end{eqnarray*}%
By definition, $F_{-n}=\left( -1\right) ^{n+1}F_{n}$, thus 
\begin{equation*}
\sum_{r=1}^{k}F_{2r-1}\left( 1-t\right) ^{r}=\frac{\left( 1-t\right) \left\{
t-\left( 1-t\right) ^{k}\left( F_{2k}+tF_{2k-1}\right) \right\} }{t^{2}+t-1}.
\end{equation*}
Then 
\begin{equation*}
^{k}a\left( t\right) =\frac{1}{\left( 1-t\right) ^{k}}\left\{ \frac{t\left(
1-t\right) ^{k}\left\{ F_{2k}+tF_{2k-1}\right\} }{1-t-t^{2}}\right\} =\frac{%
t\left\{ F_{2k}+tF_{2k-1}\right\} }{1-t-t^{2}}.
\end{equation*}

The proof of (\ref{symgenfibII}) can be proven by the same approach.
\end{proof}

Let us consider a similar proposition for even and odd Fibonacci numbers.

\begin{proposition}
\label{propfib2}With initial sequences $a_{n}^{0}=F_{2n-1}$ and $%
a_{0}^{n}=F_{2n}$, $n\geq 1$, we have%
\begin{equation*}
\sum_{n=1}^{\infty }\left( C_{n,k}+A_{k,n}\right) t^{n}=\frac{-t}{t^{2}+t-1}%
\left\{ \frac{t\left( t^{2}-t+1\right) }{\left( 1-t\right) ^{k}\left(
t^{2}-3t+1\right) }+F_{2k+1}+tF_{2k}\right\}
\end{equation*}%
and%
\begin{equation*}
\sum_{k=1}^{\infty }\left( C_{n,k}+A_{k,n}\right) t^{k}=\frac{t}{t^{2}+t-1}%
\left\{ \frac{2t\left( t^{2}-t+1\right) }{\left( 1-t\right) ^{n}\left(
t^{2}-3t+1\right) }-F_{2n}-tF_{2n-1}\right\} ,
\end{equation*}%
where 
\begin{equation*}
C_{n,k}:=\sum_{i=0}^{k-1}\binom{n+k-i-2}{n-1}F_{2i}.
\end{equation*}
\end{proposition}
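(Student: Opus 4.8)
The plan is to obtain both identities as the row and column generating functions furnished by Theorem \ref{thegengenf} for the initial data $a_n^0=F_{2n-1}$, $a_0^n=F_{2n}$. By the entry formula (\ref{genentry}) the coefficient of $t^n$ (resp. of $t^k$) on the left is the matrix entry $a_n^k$, whose two binomial sums, after the index shifts $i\mapsto i+1$, $j\mapsto j+1$ and the splitting $F_{2i}=F_{2i-1}+F_{2i-2}$, reorganize into sums of the types $A_{\cdot,\cdot}$ and $C_{\cdot,\cdot}$ occurring in the statement. Hence it suffices to evaluate the right-hand sides of (\ref{gensym-row}) and (\ref{gesym-column}) for this particular seed, and the two identities are symmetric instances of one computation.

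First I would record the two seed series. From the general Fibonacci generating function (\ref{gengenfib}) with $k=2$ (so $L_2=3$ and the denominator is $1-3t+t^2=t^2-3t+1$) one reads off ${}^{0}a(t)=\sum_{n\ge1}F_{2n-1}t^n=\frac{t(1-t)}{t^2-3t+1}$ and ${}^{0}\overline{a(t)}=\sum_{k\ge1}F_{2k}t^k=\frac{t}{t^2-3t+1}$. These supply the first term inside the braces of (\ref{gensym-row}) and (\ref{gesym-column}) respectively.

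The substantive step is the finite sum $\sum_{r=1}^{k}F_{2r}(1-t)^r$ in (\ref{gensym-row}) (and, symmetrically, $\sum_{j=1}^{n}F_{2j-1}(1-t)^j$ in (\ref{gesym-column})). I would evaluate it exactly as in the proof of Proposition \ref{propfib1}: split it as a full series minus a tail, $\sum_{r=1}^{\infty}-\sum_{r=k+1}^{\infty}$, compute the full series from (\ref{gengenfib}) at the argument $1-t$, for which the denominator becomes $1-3(1-t)+(1-t)^2=t^2+t-1$, and compute the tail by shifting the summation index and reapplying (\ref{gengenfib}) together with the reflection $F_{-n}=(-1)^{n+1}F_n$. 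The two Fibonacci coefficients produced by the tail collapse through $F_{2k+2}-F_{2k}=F_{2k+1}$, giving $\sum_{r=1}^{k}F_{2r}(1-t)^r=\frac{(1-t)-(1-t)^{k+1}(F_{2k+1}+tF_{2k})}{t^2+t-1}$.

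Substituting the seed and this finite sum into (\ref{gensym-row}) and clearing the prefactor $\frac{1}{(1-t)^k}$ produces three terms; the one carrying $F_{2k+1}+tF_{2k}$ is already in final form, and the remaining two are $\frac{t(1-t)}{(1-t)^k(t^2-3t+1)}$ and $\frac{t}{(1-t)^k(t^2+t-1)}$. The one place that needs care --- the crux of the proof --- is merging these two: after factoring out $\frac{t}{(1-t)^k}$, one must add $\frac{1-t}{t^2-3t+1}+\frac{1}{t^2+t-1}$ over the common denominator $(t^2-3t+1)(t^2+t-1)$ and verify that the numerator $(1-t)(t^2+t-1)+(t^2-3t+1)$ collapses to $-t(t^2-t+1)$. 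This is exactly what conjures the factor $t^2-t+1$ and the overall prefactor $\frac{-t}{t^2+t-1}$ in the stated answer. Granting this algebraic identity, the first formula follows, and the second is obtained by the identical route from (\ref{gesym-column}), with the seed ${}^{0}\overline{a(t)}$ and the finite sum $\sum_{j=1}^{n}F_{2j-1}(1-t)^j$ accounting for the altered numerical coefficients of the $F_{2n}$, $F_{2n-1}$ terms in that identity.
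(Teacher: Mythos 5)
Your overall strategy is exactly the one the paper intends: Proposition \ref{propfib2} is stated without proof, and the expected argument is the proof of Proposition \ref{propfib1} rerun with the roles of the odd- and even-indexed seeds interchanged, which is what you describe. For the first display your computation is sound and complete: the seed series $\sum_{n\ge 1}F_{2n-1}t^n=\tfrac{t(1-t)}{t^2-3t+1}$ and $\sum_{k\ge1}F_{2k}t^k=\tfrac{t}{t^2-3t+1}$, the full-series-minus-tail evaluation of $\sum_{r=1}^{k}F_{2r}(1-t)^r$ using $F_{-2k}=-F_{2k}$ and $F_{2k+2}-F_{2k}=F_{2k+1}$, and the key collapse $(1-t)(t^2+t-1)+(t^2-3t+1)=-t(t^2-t+1)$ all check out. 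One caveat you gloss over: after the index shifts the first binomial sum in (\ref{genentry}) becomes $\sum_{i=0}^{k-1}\binom{n+k-i-2}{n-1}F_{2i+2}=A_{n,k}+C_{n,k}$, so the entry is $a_n^k=A_{n,k}+C_{n,k}+A_{k,n}$, which carries an extra $A_{n,k}$ beyond the printed left-hand side $C_{n,k}+A_{k,n}$ (at $n=k=1$: $a_1^1=2$ but $C_{1,1}+A_{1,1}=1$, while the stated right-hand side expands as $2t+4t^2+9t^3+\cdots$, i.e.\ it generates the full $a_n^k$). What you actually prove is $\sum_n a_n^k t^n=\mathrm{RHS}$; identifying this with the printed left-hand side requires flagging the missing $A_{n,k}$ rather than asserting the sums "reorganize into" the statement.

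The more serious problem is the last sentence, where you dispatch the second display as "obtained by the identical route." Carrying out that route (seed $\tfrac{t}{t^2-3t+1}$ and the finite sum $\sum_{j=1}^{n}F_{2j-1}(1-t)^j$ taken verbatim from the proof of Proposition \ref{propfib1}) the numerator to be collapsed is $(t^2+t-1)+t(t^2-3t+1)=t^3-2t^2+2t-1=(t-1)(t^2-t+1)$, not $2t(t^2-t+1)$, so one lands at
\begin{equation*}
\sum_{k=1}^{\infty }a_{n}^{k}t^{k}=\frac{t}{t^{2}+t-1}\left\{ \frac{\left(
t-1\right) \left( t^{2}-t+1\right) }{\left( 1-t\right) ^{n}\left(
t^{2}-3t+1\right) }-F_{2n}-tF_{2n-1}\right\} ,
\end{equation*}
which is not the printed formula. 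A direct check confirms the discrepancy: at $n=1$ the column is $a_1^k=F_{2k+1}$, generating $2t+5t^2+13t^3+\cdots$, and the expression above reproduces this, whereas the printed right-hand side begins $t+0\cdot t^2-5t^3+\cdots$ and so cannot be a generating function of these (or of $C_{n,k}+A_{k,n}$ either). So the printed second identity appears to be in error, and your claim that the identical route delivers it would fail at exactly this step; the proof needs the computation written out, arriving at the corrected right-hand side.
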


\begin{remark}
By considering Proposition \ref{probfibeq}, Proposition\ref{propfib1} and
Proposition\ref{propfib2} we have the generating functions for tails of the
Fibonacci sequence.
\end{remark}

\begin{remark}
We obtain similar propositions to proposition \ref{propfib1} and proposition %
\ref{propfib2} for the Lucas numbers just by changing $F_{n}$ with $L_{n}$.
\end{remark}

\subsection{Applications for The Incomplete Fibonacci and Incomplete Lucas
Numbers}

We have applications with two different methods.

\subsubsection{With Euler-Seidel Algorithm}

We give some applications on incomplete Fibonacci numbers with Euler-Seidel
method.

First, take the incomplete Fibonacci numbers $F_{r+n}\left( s+n\right) $ as $%
a_{n}^{0}$. From ($\ref{c-fr-fc}$), 
\begin{equation*}
a_{0}^{n}=\sum_{k=0}^{n}\binom{n}{k}F_{r+k}\left( s+k\right) .
\end{equation*}
($\ref{binsuminc}$) implies 
\begin{equation*}
a_{0}^{n}=F_{r+2n}\left( s+n\right) .
\end{equation*}

Because of the selection of $a_{n}^{0}$ and the last equation of $a_{0}^{n}$%
, we obtain the dual formulae of (\ref{binsuminc}): 
\begin{equation}
F_{r+n}\left( s+n\right) =\sum_{k=0}^{n}\binom{n}{k}\left( -1\right)
^{n-k}F_{r+2k}\left( s+k\right) \text{, \ \ }0\leq s\leq \frac{r-n-1}{2}.
\label{inc-cessaro}
\end{equation}
Similarly, 
\begin{equation*}
L_{r+2n}\left( s+n\right) =\sum_{k=0}^{n}\binom{n}{k}L_{r+k}\left(
s+k\right) \text{, \ \ }0\leq s\leq \frac{r-n}{2},
\end{equation*}
and its dual is 
\begin{equation*}
L_{r+n}\left( s+n\right) =\sum_{k=0}^{n}\binom{n}{k}\left( -1\right)
^{n-k}L_{r+2k}\left( s+k\right) \text{, \ \ }0\leq s\leq \frac{r-n}{2}.
\end{equation*}

Secondly, let $a_{n}^{0}=F_{n}\left( k\right)$. Then from ($\ref{c-fr-fc}$), 
\begin{equation*}
a_{0}^{n}=\sum_{l=0}^{n}\binom{n}{l}F_{l}\left( k\right) .
\end{equation*}
We present a new formula to this quantity.

\begin{theorem}
\begin{equation*}
\sum_{l=0}^{n}\binom{n}{l}F_{l}\left( k\right) =\left\{ 
\begin{array}{ll}
0 & \text{if }n<2k+1 \\ 
F_{2k+1} & \text{if }n=2k+1 \\ 
F & \text{if }n\geq 2k+2%
\end{array}
\right.
\end{equation*}
where, 
\begin{eqnarray*}
F&=&\sum_{r=2k+1}^{n}\left[ F_{2k}\binom{r}{2k}+F_{2k-1}\binom{r-1}{2k-1} %
\right] F_{2n-2r} \\
&&-\sum_{r=0}^{n}\sum_{m=0}^{r}F_{2n-2r-4k-2}\binom{r+k-m-1}{k}\binom{m+k}{k}%
2^{m}.
\end{eqnarray*}
\end{theorem}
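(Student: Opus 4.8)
The plan is to run the classical Euler--Seidel algorithm \eqref{c-rec} on the initial row $a_{n}^{0}=F_{n}(k)$ and to read the first column $a_{0}^{n}=\sum_{l=0}^{n}\binom{n}{l}F_{l}(k)$ (this is Dumont's identity \eqref{c-fr-fc}) off a generating function. By Euler's relation \eqref{cl-gen-column}, if $a(t)=\sum_{n\ge0}a_{n}^{0}t^{n}$ then $\overline{a}(t)=\sum_{n\ge0}a_{0}^{n}t^{n}=\frac{1}{1-t}a\!\left(\frac{t}{1-t}\right)$; here $a(t)$ is precisely the generating function $R_{k}(t)$ of the incomplete Fibonacci numbers in \eqref{inibgen}. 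Thus the whole statement reduces to substituting $u=\frac{t}{1-t}$ into $R_{k}$, simplifying, and extracting the coefficient of $t^{n}$ from $\overline{a}(t)$.

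First I would perform the substitution. Writing $u=\frac{t}{1-t}$ one gets $1-u=\frac{1-2t}{1-t}$, $u^{2}=\frac{t^{2}}{(1-t)^{2}}$ and, crucially, $1-u-u^{2}=\frac{t^{2}-3t+1}{(1-t)^{2}}$, so the denominator $1-t-t^{2}$ of \eqref{inibgen} turns into the quadratic $t^{2}-3t+1$ already familiar from \eqref{symgenfibII}. Using $F_{2k+1}-F_{2k}=F_{2k-1}$ to collapse $F_{2k+1}+F_{2k}u$ into $\frac{F_{2k+1}-F_{2k-1}t}{1-t}$, and cancelling the common powers of $(1-2t)$ and $(1-t)$, I expect $\overline{a}(t)$ to split cleanly as
\[
\overline{a}(t)=\frac{t^{2k+1}\bigl(F_{2k+1}-F_{2k-1}t\bigr)}{(1-t)^{2k+1}(t^{2}-3t+1)}-\frac{t^{2k+3}}{(1-t)^{k+1}(1-2t)^{k+1}(t^{2}-3t+1)}.
\]
The two summands of $F$ should come, respectively, from these two rational pieces.

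For the coefficient extraction I would use the three standard expansions $\frac{1}{(1-t)^{m+1}}=\sum_{j}\binom{j+m}{m}t^{j}$, $\frac{1}{(1-2t)^{k+1}}=\sum_{b}\binom{b+k}{k}2^{b}t^{b}$, and $\frac{1}{t^{2}-3t+1}=\sum_{c}F_{2c+2}t^{c}$ (equivalently $\frac{t}{t^{2}-3t+1}=\sum_{s}F_{2s}t^{s}$, read off from \eqref{gengenfib} with $k=2$). The first piece factors as $\Bigl(\sum_{r}c_{r}t^{r}\Bigr)\frac{t}{t^{2}-3t+1}$ with $c_{r}=F_{2k}\binom{r}{2k}+F_{2k-1}\binom{r-1}{2k-1}$, where the Pascal identity $\binom{r}{2k}-\binom{r-1}{2k}=\binom{r-1}{2k-1}$ converts the raw binomials into the coefficients occurring in the first sum of $F$; convolving against $F_{2s}$ then reproduces $\sum_{r}[F_{2k}\binom{r}{2k}+F_{2k-1}\binom{r-1}{2k-1}]F_{2n-2r}$. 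For the second piece, the triple Cauchy product with the substitution $b=m$, $a=r-m-1$, $c=n-r-2k-2$ (so that $a+b+c=n-2k-3$, matching the prefactor $t^{2k+3}$) reproduces the double sum $\sum_{r}\sum_{m}F_{2n-2r-4k-2}\binom{r+k-m-1}{k}\binom{m+k}{k}2^{m}$.

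The two exceptional cases are immediate from \eqref{incfib-fib1}: since $F_{l}(k)=0$ for $l\le 2k$, every term of $\sum_{l=0}^{n}\binom{n}{l}F_{l}(k)$ vanishes when $n\le 2k$, giving $0$; and for $n=2k+1$ only the top term survives, giving $\binom{2k+1}{2k+1}F_{2k+1}(k)=F_{2k+1}$. The main obstacle will be purely bookkeeping: matching the summation ranges exactly. In particular one must reconcile the boundary index $r=2k$ produced by the first rational piece against the stated lower limit $r=2k+1$, and control the terms of the second sum with $n-r-2k-2<0$, where $F_{2n-2r-4k-2}$ must be interpreted through $F_{-n}=(-1)^{n+1}F_{n}$; checking that these boundary contributions combine to yield precisely the ranges displayed in $F$ is the delicate step, the rest being the routine algebra of the substitution and of the Cauchy products.
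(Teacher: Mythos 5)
Your proposal is essentially the paper's own proof: run the Euler--Seidel algorithm on $a_n^0=F_n(k)$, apply Euler's relation \eqref{cl-gen-column} to the generating function $R_k(t)$ of \eqref{inibgen}, split $\overline{a}(t)$ into the same two rational pieces, and extract coefficients via Cauchy products with $(1-t)^{-(m+1)}$, $(1-2t)^{-(k+1)}$ and $t/(t^2-3t+1)=\sum_s F_{2s}t^s$, the exceptional cases coming from \eqref{incfib-fib1}. The only differences are cosmetic: your simplification $F_{2k+1}+F_{2k}u=(F_{2k+1}-F_{2k-1}t)/(1-t)$ is the corrected form of the numerator the paper writes as $F_{2k+1}-tF_{2k}$, and the boundary bookkeeping you flag as the delicate step is precisely what the paper compresses into its closing ``after some rearrangement.''
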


\begin{proof}
\eqref{inibgen} gives that 
\begin{equation*}
a\left( t\right) =\sum_{j=0}^{\infty }F_{j}\left( k\right) t^{j}=t^{2k+1}%
\frac{\left( F_{2k+1}+tF_{2k}\right) \left( 1-t\right) ^{k+1}-t^{2}}{\left(
1-t\right) ^{k+1}\left( 1-t-t^{2}\right) }.
\end{equation*}
From (\ref{cl-gen-column}), 
\begin{eqnarray*}
\overline{a}(t) &=&\sum_{n=0}^{\infty }\left[ \sum_{l=0}^{n}\binom{n}{l}%
F_{l}\left( k\right) \right] t^{n}=\frac{t^{2k+1}}{\left( 1-2t\right)
^{k+1}\left( t^{2}-3t+1\right) \left( 1-t\right) ^{k-1}} \\
&&\times \left\{ \left( F_{2k+1}-tF_{2k}\right) \frac{\left( 1-2t\right)
^{k+1}}{\left( 1-t\right) ^{k+2}}-\frac{t^{2}}{\left( 1-t\right) ^{2}}%
\right\} .
\end{eqnarray*}
By taking out the generating function of even Fibonacci numbers we have 
\begin{eqnarray*}
\overline{a}(t) &=&\frac{t^{2k+1}}{\left( t^{2}-3t+1\right) }\left\{ \left(
F_{2k+1}-tF_{2k}\right) \sum_{n=0}^{\infty }\binom{n+2k}{n}t^{n}\right. \\
&&\left. -t^{2}\sum_{n=0}^{\infty }\binom{n+k}{n}2^{n}t^{n}\sum_{n=0}^{%
\infty }\binom{n+k}{n}t^{n}\right\} \\
&=&t^{2k}\sum_{n=0}^{\infty }\sum_{r=0}^{n}F_{2n-2r}\left\{ F_{2k+1}\binom{%
r+2k}{r}-tF_{2k}\binom{r+2k}{r}\right. \\
&&\left. -t^{2}\left[ \sum_{m=0}^{r}\binom{r-m+k}{r-m}\binom{m+k}{m}2^{m}%
\right] \right\} t^{n} \\
&=&F_{2}F_{2k+1}t^{2k+1}+\sum_{n=2k+2}^{\infty }\left\{
\sum_{r=0}^{n-2k}F_{2n-4k-2r}F_{2k+1}\binom{r+2k}{r}\right. \\
&&-\sum_{r=1}^{n-2k}F_{2n-4k-2r}F_{2k}\binom{r+2k-1}{r-1} \\
&&\left. -\sum_{r=2}^{n-2k}\sum_{m=0}^{r-2}F_{2n-4k-2r}\binom{r-2-m+k}{r-2-m}%
\binom{m+k}{m}2^{m}\right\} t^{n}.
\end{eqnarray*}
After some rearrangement, 
\begin{equation*}
\overline{a}(t) =F_{2}F_{2k+1}t^{2k+1}+\sum_{n=2k+2}^{\infty }\left\{
\sum_{r=2}^{n-2k}F_{2n-4k-2r}\left[ F_{2k+1}\binom{r+2k}{r}\right. \right.
\end{equation*}
\begin{equation*}
\left. -F_{2k}\binom{r+2k-1}{r-1}-\sum_{m=0}^{r-2}\binom{r-2-m+k}{r-2-m} 
\binom{m+k}{m}2^{m}\right]
\end{equation*}
\begin{equation*}
\left. +F_{2n-4k}F_{2k+1}+\left( 2k+1\right)
F_{2n-4k-2}F_{2k+1}-F_{2n-4k-2}F_{2k}\right\} t^{n}
\end{equation*}
\begin{equation*}
=F_{2}F_{2k+1}t^{2k+1}+\sum_{n=2k+2}^{\infty }\left\{
\sum_{r=2}^{n-2k}F_{2n-4k-2r}\left[ \binom{r+2k-1}{r-1}\frac{%
rF_{2k}+2kF_{2k+1}}{r}\right. \right.
\end{equation*}
\begin{equation*}
\left. -\sum_{m=0}^{r-2}\binom{r-2-m+k}{r-2-m}\binom{m+k}{m}2^{m}\right]
\end{equation*}
\begin{equation*}
\left. +F_{2n-4k}F_{2k+1}+\left( 2k+1\right)
F_{2n-4k-2}F_{2k+1}-F_{2n-4k-2}F_{2k}\right\} t^{n}
\end{equation*}
we proved the theorem.
\end{proof}

A same approach proves a parallel result for incomplete Lucas numbers

\begin{theorem}
\begin{equation*}
\sum_{l=0}^{n}\binom{n}{l}L_{l}\left( k\right) =\left\{ 
\begin{array}{ll}
0 & \text{if }n<2k \\ 
L_{2k} & \text{if }n=2k \\ 
\left( 2k+1\right) L_{2k}+L_{2k+2} & \text{if }n=2k+1 \\ 
L & \text{if }n\geq 2k+2%
\end{array}%
\right.
\end{equation*}%
where,%
\begin{eqnarray*}
L &=&\sum_{r=0}^{n-2k-2}\left( \left\{
F_{2n-4k-2r+2}L_{2k}-F_{2n-4k-2r}L_{2k-2}\right\} \binom{r+2k-1}{r}\right. \\
&&-\left\{ F_{2n-4k-2r-5}+F_{2n-4k-2r-3}\right\} \sum_{m=0}^{r}\binom{r-m+k}{%
r-m}\binom{m+k}{m}2^{m} \\
&&+L_{2k}\binom{n-1}{n-2k}+L_{2k+2}\binom{n-2}{n-2k-1}.
\end{eqnarray*}
\end{theorem}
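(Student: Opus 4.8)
The plan is to mimic exactly the Euler-Seidel argument used for the incomplete Fibonacci numbers in the previous theorem, replacing the incomplete-Fibonacci generating function $R_k(t)$ of \eqref{inibgen} by the incomplete-Lucas generating function $S_k(t)$ of \eqref{inlucgen}. Concretely, I would take $a_n^0 = L_n(k)$ as the initial sequence of an Euler-Seidel matrix \eqref{c-rec}. Then Dumont's first identity \eqref{c-fr-fc} gives $a_0^n = \sum_{l=0}^n \binom{n}{l} L_l(k)$, which is precisely the quantity to be evaluated; hence it suffices to compute the first-column generating function $\overline{a}(t) = \sum_n a_0^n t^n$. By Euler's generating-function identity \eqref{cl-gen-column}, this column generating function is obtained from $a(t) = S_k(t)$ by the substitution $t \mapsto t/(1-t)$ together with a factor $1/(1-t)$:
\begin{equation*}
\overline{a}(t) = \frac{1}{1-t}\, S_k\!\left(\frac{t}{1-t}\right).
\end{equation*}

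The next step is the algebraic simplification of the right-hand side. Under $t \mapsto t/(1-t)$ the denominator $1-t-t^2$ of $S_k$ becomes, up to a power of $1-t$, the quadratic $t^2-3t+1$, while the factors $(1-t)^{k+1}$ turn into a mixture of powers of $1-t$ and $1-2t$, exactly as in the Fibonacci computation. I expect $\overline{a}(t)$ to take the form of a rational function whose denominator carries $t^2-3t+1$ together with powers of $1-t$ and $1-2t$, and whose numerator contains an overall factor $t^{2k}$ reflecting the vanishing for small $n$.

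The extraction of coefficients then proceeds by recognising $1/(t^2-3t+1)$ as a shift of the even-indexed Fibonacci generating function $\sum F_{2n}t^n$, and by expanding the remaining rational factors as Newton binomial series, $(1-t)^{-(2k+1)} = \sum \binom{n+2k}{n} t^n$, $(1-2t)^{-(k+1)} = \sum \binom{n+k}{n} 2^n t^n$, and so on. Multiplying these series and collecting the coefficient of $t^n$ produces the convolution sums $\sum_r \{F_{2n-4k-2r+2}L_{2k}-F_{2n-4k-2r}L_{2k-2}\}\binom{r+2k-1}{r}$ together with the inner $2^m$-weighted double sum that appear in the expression for $L$. Finally I would read off the low-degree coefficients separately to obtain the boundary values $0$ for $n<2k$, $L_{2k}$ for $n=2k$ and $(2k+1)L_{2k}+L_{2k+2}$ for $n=2k+1$, and reorganise the generic coefficient using Fibonacci-Lucas relations such as $L_{2k}=F_{2k+1}+F_{2k-1}$ to match the stated closed form.

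The main obstacle will be the bookkeeping in the coefficient extraction. The numerator of $S_k$ carries the extra factor $t^2(2-t)$, in contrast to the single $-t^2$ of $R_k$; this $(2-t)$ is exactly what generates the combination $\{F_{2n-4k-2r-5}+F_{2n-4k-2r-3}\}$ in the formula for $L$, rather than a single Fibonacci term as in the Fibonacci theorem. Tracking the index shifts introduced by this factor and by the several powers of $t$ out front, so that the two small cases $n=2k$ and $n=2k+1$ (and the threshold $n<2k$) separate cleanly from the generic $n\ge 2k+2$ sum, is the delicate part; the remainder is the routine, if lengthy, Cauchy-product arithmetic already carried out in the Fibonacci case.
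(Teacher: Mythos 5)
Your proposal follows exactly the route the paper intends: the paper gives no separate proof for the Lucas case, stating only that ``a same approach proves a parallel result,'' namely the Euler--Seidel argument with $a_n^0=L_n(k)$, the substitution $\overline{a}(t)=\frac{1}{1-t}S_k\bigl(\frac{t}{1-t}\bigr)$ via \eqref{cl-gen-column}, and coefficient extraction against $1/(t^2-3t+1)$ and the Newton binomial series. You even correctly identify the one genuine point of divergence from the Fibonacci computation, the numerator factor $t^2(2-t)$ producing the two-term combination $F_{2n-4k-2r-5}+F_{2n-4k-2r-3}$, so your plan is a faithful (and somewhat more explicit) rendering of the paper's argument.
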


\subsubsection{An Application of the With Symmetric Algorithm}

Here we give new concepts as "hyperfibonacci numbers" and "hyperlucas
numbers" like "hyperharmonic numbers". They will be useful for us.

\begin{definition}
Let the hyperfibonacci numbers $F_{n}^{\left( r\right) }$ and the hyperlucas
numbers $L_{n}^{\left( r\right) }$be defined respectively as 
\begin{eqnarray}
F_{n}^{\left( r\right) } &=&\sum_{k=0}^{n}F_{k}^{\left( r-1\right) }\text{,
with }F_{n}^{\left( 0\right) }=F_{n},\text{ }F_{0}^{\left( r\right) }=0\text{%
, and }F_{1}^{\left( r\right) }=1;  \label{defhypfib} \\
L_{n}^{\left( r\right) } &=&\sum_{k=0}^{n}L_{k}^{\left( r-1\right) }\text{,
with }L_{n}^{\left( 0\right) }=L_{n},\text{ }L_{0}^{\left( r\right) }=0\text{%
, and }L_{1}^{\left( r\right) }=1.  \notag
\end{eqnarray}
\end{definition}

\begin{proposition}
We have generating functions of the hyperfibonacci numbers and the
hyperlucas numbers, respectively, as follows 
\begin{equation*}
\sum_{n=0}^{\infty }F_{n}^{\left( r\right) }t^{n}=\frac{t}{\left(
1-t-t^{2}\right) \left( 1-t\right) ^{r}},
\end{equation*}%
\begin{equation*}
\sum_{n=0}^{\infty }L_{n}^{\left( r\right) }t^{n}=\frac{2-t}{\left(
1-t-t^{2}\right) \left( 1-t\right) ^{r}}.
\end{equation*}
\end{proposition}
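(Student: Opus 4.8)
The plan is to derive both generating functions directly from the recursive definition \eqref{defhypfib} by exploiting the fact that the defining sum $F_n^{(r)}=\sum_{k=0}^n F_k^{(r-1)}$ is a discrete partial-summation operator, which corresponds to multiplication by $\frac{1}{1-t}$ at the level of generating functions. Concretely, I would set $G_r(t):=\sum_{n=0}^\infty F_n^{(r)}t^n$ and first establish the recurrence $G_r(t)=\frac{1}{1-t}\,G_{r-1}(t)$, so that $G_r(t)=\frac{1}{(1-t)^r}\,G_0(t)$. This reduces the whole problem to computing the base case $G_0(t)=\sum_{n=0}^\infty F_n t^n$, which is the classical Fibonacci generating function $\frac{t}{1-t-t^2}$ (recoverable from \eqref{gengenfib} with $k=1$, $r=0$). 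Combining these gives exactly $\frac{t}{(1-t-t^2)(1-t)^r}$, and the identical argument with $L_n$ in place of $F_n$, using $\sum_{n=0}^\infty L_n t^n=\frac{2-t}{1-t-t^2}$, yields the hyperlucas formula.

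The key step to justify carefully is the operator identity $G_r(t)=\frac{1}{1-t}G_{r-1}(t)$. I would prove it by writing
\begin{equation*}
G_r(t)=\sum_{n=0}^\infty\Big(\sum_{k=0}^n F_k^{(r-1)}\Big)t^n
=\Big(\sum_{n=0}^\infty t^n\Big)\Big(\sum_{k=0}^\infty F_k^{(r-1)}t^k\Big)
=\frac{1}{1-t}\,G_{r-1}(t),
\end{equation*}
where the middle equality is the standard Cauchy-product identity that the coefficients of a product by $\frac{1}{1-t}$ are the partial sums. One genuine subtlety here is the interaction between this partial-summation relation and the prescribed boundary values $F_0^{(r)}=0$ and $F_1^{(r)}=1$ for every $r$: I need to check that these initial conditions are consistent with, rather than an override of, the convolution identity. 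Since $F_0^{(0)}=F_0=0$ forces $F_0^{(r)}=0$ for all $r$, and $F_1^{(r)}=F_0^{(r)}+F_1^{(r-1)}=F_1^{(r-1)}=\cdots=F_1=1$, both stipulated values propagate automatically from the recursion, so no separate correction term is needed in $G_r$.

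Alternatively, I would present this as a one-line corollary of Theorem \ref{thegengenf}: the hyperharmonic computation already showed that the symmetric algorithm turns an initial row generating function $^{0}a(t)$ into $\frac{1}{(1-t)^k}$ times essentially $^{0}a(t)$, and the hyperfibonacci recurrence is precisely the vertical summation operator realized by that same matrix. The main obstacle I anticipate is purely bookkeeping rather than conceptual: one must confirm that the summation index in \eqref{defhypfib} starts at $k=0$ (so that the full geometric series $\frac{1}{1-t}$ appears, with no truncated lower terms) and that the telescoping of the boundary conditions does not introduce an extra additive polynomial in $t$. Once those edge cases are verified, both generating functions follow immediately, and I would conclude by noting that the Lucas case is word-for-word identical after substituting the base generating function $\frac{2-t}{1-t-t^2}$.
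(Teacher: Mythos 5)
Your proposal is correct and is essentially the paper's own proof, which is stated in one line as ``Cauchy product and induction'': your recurrence $G_r(t)=\frac{1}{1-t}G_{r-1}(t)$ is exactly the Cauchy-product step, and iterating it down to the classical base generating functions is the induction. One caveat: the consistency check you rightly single out as the genuine subtlety, and verify for the Fibonacci case, actually \emph{fails} in the Lucas case, since $L_0=2$ forces $L_0^{(1)}=2$ and $L_1^{(1)}=3$ under the convolution, contradicting the stipulated $L_0^{(r)}=0$, $L_1^{(r)}=1$ in \eqref{defhypfib}; the displayed generating function $\frac{2-t}{(1-t-t^2)(1-t)^r}$ is the one produced by the pure partial-summation operator, so your proof proves the stated formula, but the claim that the Lucas case is ``word-for-word identical'' glosses over an inconsistency that lies in the paper's definition rather than in your argument.
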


\begin{proof}
Proof is obtained immediately by using Cauchy product and induction.
\end{proof}

Now we are ready for the application. Let us recall \eqref{An,k}. By $\left( %
\ref{inibgen}\right) $, 
\begin{equation*}
\sum_{j=0}^{\infty }F_{j}\left( k\right) t^{j}=t^{2k}\sum_{n=1}^{\infty
}\left( A_{n,k}+B_{n,k}\right) t^{n}-\frac{t^{2k+2}}{\left( 1-t\right) ^{k+1}%
}\sum_{n=0}^{\infty }F_{n}t^{n}.
\end{equation*}%
Applying the concept of hyperfibonacci numbers, we can rewrite this as 
\begin{equation*}
\sum_{j=0}^{\infty }F_{j}\left( k\right) t^{j}=\sum_{n=2k+1}^{\infty }\left(
A_{n-2k,k}+B_{n-2k,k}\right) t^{n}-\sum_{n=2k+2}^{\infty }F_{n-2k-2}^{\left(
k+1\right) }t^{n}
\end{equation*}

Here with help of the proposition \ref{probfibeq} we have the following
theorem.

\begin{theorem}
\label{lastfib}We have 
\begin{equation}
F_{n}\left( k\right) =\left\{ 
\begin{tabular}{ll}
$0$ & $\text{if }0\leq n<2k+1$ \\ 
$F_{2k+1}$ & $\text{if }n=2k+1$ \\ 
$A_{n-2k,k}+B_{n-2k,k}-F_{n-2k-2}^{\left( k+1\right) }$ & $\text{if }n>2k+1$%
\end{tabular}%
\right.  \label{theoAn,khyp}
\end{equation}
\end{theorem}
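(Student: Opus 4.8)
The theorem is a coefficient-reading consequence of the generating-function identity displayed immediately above it, namely
\[
\sum_{j=0}^{\infty}F_{j}(k)\,t^{j}=\sum_{n=2k+1}^{\infty}\bigl(A_{n-2k,k}+B_{n-2k,k}\bigr)t^{n}-\sum_{n=2k+2}^{\infty}F_{n-2k-2}^{(k+1)}t^{n},
\]
so almost all of the work is already packaged in that identity and in Proposition \ref{propfib1}. The plan is therefore in two stages: first make sure of the identity (tracking every index carefully), and then extract the coefficient of $t^{n}$ on both sides, splitting into the three ranges $0\le n<2k+1$, $n=2k+1$, and $n>2k+1$.

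To pin down the identity I would start from the closed form \eqref{inibgen} for $R_{k}(t)=\sum_{j}F_{j}(k)t^{j}$ and split the numerator $(F_{2k+1}+F_{2k}t)(1-t)^{k+1}-t^{2}$ into its two summands. The first summand cancels the factor $(1-t)^{k+1}$ in the denominator, leaving a rational function of Fibonacci type with denominator $1-t-t^{2}$; this piece I would identify with the row generating function $^{k}a(t)=\sum_{n\ge 1}(A_{n,k}+B_{n,k})t^{n}$ of Proposition \ref{propfib1}, multiplied by the appropriate power of $t$. The second summand is $-\,t^{2k+3}\big/\bigl((1-t)^{k+1}(1-t-t^{2})\bigr)$, and here I would invoke the generating function $\sum_{n\ge 0}F_{n}^{(k+1)}t^{n}=t\big/\bigl((1-t-t^{2})(1-t)^{k+1}\bigr)$ of the hyperfibonacci numbers from \eqref{defhypfib} to recognize this piece as a shift of $\sum_{n\ge 0}F_{n}^{(k+1)}t^{n}$. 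Re-indexing the two resulting series by $n\mapsto n-2k$ and $n\mapsto n-2k-2$ then produces the displayed identity; the definitions in \eqref{An,k} are what let the first series be written in the stated $A_{n-2k,k}+B_{n-2k,k}$ form.

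With the identity secured, comparing coefficients of $t^{n}$ gives the theorem directly. For $0\le n<2k+1$ both sums begin at exponents larger than $n$, so the coefficient is $0$; for $n>2k+1$ both sums contribute and the coefficient is exactly $A_{n-2k,k}+B_{n-2k,k}-F_{n-2k-2}^{(k+1)}$. The only case that I would treat separately is the boundary $n=2k+1$, where the second sum has not yet switched on: rather than relying on the first-sum coefficient there, I would read off the known initial value $F_{2k+1}(k)=F_{2k+1}$ from \eqref{incfib-fib1}, which settles that row cleanly. The step I expect to be the main obstacle is precisely this index bookkeeping: the shifts by $2k$ and $2k+2$ must be aligned so that $A_{n-2k,k}+B_{n-2k,k}$ and $F^{(k+1)}_{n-2k-2}$ land on the intended Fibonacci and hyperfibonacci indices, and the transition between the boundary $n=2k+1$ and the generic range $n>2k+1$ is exactly where an off-by-one is easy to introduce. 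To guard against this I would check the formula on explicit small cases — for instance $k=0$, where $F_{n}(0)=1$ for $n\ge 1$ — and cross-check against the tail identities of Proposition \ref{probfibeq}, which are the intended tool for simplifying the boundary term.
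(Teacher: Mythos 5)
Your proposal follows essentially the same route as the paper: the authors likewise split the closed form \eqref{inibgen} into the row generating function of Proposition \ref{propfib1} (shifted by $t^{2k}$) minus a $t^{2k+2}$-shifted hyperfibonacci generating function, and then read off coefficients, invoking Proposition \ref{probfibeq} for the boundary term. Your explicit attention to the index shifts and the $n=2k+1$ case via \eqref{incfib-fib1} is a sensible safeguard, but it is the same argument.
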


Theorem \ref{lastfib} provides an interesting corollary.

\begin{corollary}
For ordinary Fibonacci numbers, the following equalities are valid 
\begin{equation}
F_{2k+1}-1=\sum_{i=0}^{k-1}\left( k-i\right) F_{2i+1},  \label{fibnew1}
\end{equation}
\begin{equation}
F_{2k+2}-k-1=\sum_{i=0}^{k-1}\binom{k+1-i}{2}F_{2i+1}.  \label{fibnew2}
\end{equation}
\end{corollary}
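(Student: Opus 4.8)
The plan is to read the two right-hand sides as the quantities $A_{2,k}$ and $A_{3,k}$ from \eqref{An,k} and to evaluate these by extracting a single coefficient from the generating function already recorded in Proposition \ref{propfib1}. Indeed, putting $n=2$ in $A_{n,k}=\sum_{i=0}^{k-1}\binom{n+k-i-2}{n-1}F_{2i+1}$ gives $A_{2,k}=\sum_{i=0}^{k-1}\binom{k-i}{1}F_{2i+1}=\sum_{i=0}^{k-1}(k-i)F_{2i+1}$, which is exactly the sum in \eqref{fibnew1}; likewise $n=3$ gives $A_{3,k}=\sum_{i=0}^{k-1}\binom{k+1-i}{2}F_{2i+1}$, the sum in \eqref{fibnew2}. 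So it suffices to establish $A_{2,k}=F_{2k+1}-1$ and $A_{3,k}=F_{2k+2}-k-1$.

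First I would extract $A_{n,k}+B_{n,k}$ from \eqref{symgenfib}. Writing $\frac{1}{1-t-t^{2}}=\sum_{p\ge 0}F_{p+1}t^{p}$ and multiplying by $t(F_{2k}+tF_{2k-1})$, the coefficient of $t^{m}$ on the right-hand side of \eqref{symgenfib} is $F_{2k}F_{m}+F_{2k-1}F_{m-1}$, which collapses to $F_{2k+m-1}$ by the Fibonacci addition formula $F_{a+b}=F_{a}F_{b+1}+F_{a-1}F_{b}$ (with $a=2k$, $b=m-1$). Hence $A_{m,k}+B_{m,k}=F_{2k+m-1}$ for every $m\ge 1$; in particular $A_{2,k}+B_{2,k}=F_{2k+1}$ and $A_{3,k}+B_{3,k}=F_{2k+2}$.

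The remaining step is to compute the two low-order quantities $B_{2,k}$ and $B_{3,k}$ directly from $B_{n,k}=\sum_{i=0}^{n-1}\binom{n+k-i-2}{k-1}F_{i}$. Since $F_{0}=0$, only the terms with $i\ge 1$ survive: for $n=2$ this leaves $\binom{k-1}{k-1}F_{1}=1$, so $B_{2,k}=1$; for $n=3$ it leaves $\binom{k}{k-1}F_{1}+\binom{k-1}{k-1}F_{2}=k+1$, so $B_{3,k}=k+1$. Subtracting these from the coefficient identities of the previous paragraph yields $A_{2,k}=F_{2k+1}-1$ and $A_{3,k}=F_{2k+2}-k-1$, which are precisely \eqref{fibnew1} and \eqref{fibnew2}.

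I expect no genuine obstacle here: the only points demanding care are matching the powers of $t$ when reading the coefficient off the rational function in \eqref{symgenfib} (the series there begins at $t^{1}$, so the coefficient of $t^{m}$ is governed by $F_{2k}F_{m}+F_{2k-1}F_{m-1}$ and not by a shifted version), and tracking the vanishing $F_{0}$ term so that the small binomials $\binom{k}{k-1}=k$ and $\binom{k-1}{k-1}=1$ are combined correctly in $B_{3,k}$. A fully independent verification is available by induction on $k$: using Proposition \ref{probfibeq} in the form $\sum_{i=0}^{k-1}F_{2i+1}=F_{2k}$ together with $F_{2k+3}=F_{2k+2}+F_{2k+1}$, one checks that the increments of both sides of \eqref{fibnew1} and \eqref{fibnew2} agree, confirming the closed forms.
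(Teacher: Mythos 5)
Your proof is correct, but it takes a genuinely different route from the paper's. The paper obtains the corollary as a one-line specialization of Theorem \ref{lastfib}: it plugs the two values $n=2k+2$ and $n=2k+3$ into \eqref{theoAn,khyp}, using the boundary identities $F_{2k+1}(k)=F_{2k+1}$, $F_{2k+2}(k)=F_{2k+2}$ from \eqref{incfib-fib1} together with $F_{0}^{(k+1)}=0$ and $F_{1}^{(k+1)}=1$, so the incomplete-Fibonacci and hyperfibonacci machinery does the work. You instead bypass Theorem \ref{lastfib} entirely and stay inside Proposition \ref{propfib1}: extracting the coefficient of $t^{m}$ from \eqref{symgenfib} and applying the addition formula $F_{a+b}=F_{a}F_{b+1}+F_{a-1}F_{b}$ gives the clean general identity $A_{m,k}+B_{m,k}=F_{2k+m-1}$ (equivalently, this is just the matrix entry $a_{m}^{k}$ read off \eqref{fibmat}), after which the corollary reduces to the trivial evaluations $B_{2,k}=1$ and $B_{3,k}=k+1$; all of your computations check out, including the identification of the two sums with $A_{2,k}$ and $A_{3,k}$. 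What your approach buys is self-containedness and generality: it yields $A_{m,k}=F_{2k+m-1}-B_{m,k}$ for every $m$, not just $m=2,3$, and it is immune to the bookkeeping hazards of the paper's route, whose reliance on the exact index shift $n\mapsto n-2k$ in \eqref{theoAn,khyp} versus $n\mapsto n-2k+1$ (compare the Lucas analogue \eqref{incompalg}) demands careful tracking to land on $F_{2k+1}$ rather than $F_{2k+2}$ in \eqref{fibnew1}. What the paper's route buys is brevity and the thematic point that these Fibonacci identities fall out of the incomplete-Fibonacci analysis, which is why the statement appears as a corollary of Theorem \ref{lastfib} rather than of Proposition \ref{propfib1}.
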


\begin{proof}
Lets take $n=2k+2$ in $\left( \ref{theoAn,khyp}\right) $ and $n=2k+3$ in ($%
\ref{theoAn,khyp}$).
\end{proof}

The similar result for incomplete Lucas numbers is

\begin{theorem}
\label{lastluc}We have 
\begin{equation}
L_{n}\left( k\right) =\left\{ 
\begin{tabular}{ll}
$0$ & $\text{if }0\leq n<2k$ \\ 
$L_{2k}$ & $\text{if }n=2k$ \\ 
$A_{2,k}+B_{2,k}$ & $\text{if }n=2k+1$ \\ 
$A_{n-2k+1,k}+B_{n-2k+1,k}-L_{n-2k-2}^{\left( k+1\right) }$ & $\text{if }%
n\geq 2k+2$%
\end{tabular}
\right.  \label{incompalg}
\end{equation}
\end{theorem}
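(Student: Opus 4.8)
The plan is to run the Lucas counterpart of the derivation that precedes Theorem~\ref{lastfib}, reading the incomplete Lucas generating function \eqref{inlucgen} against the row generating function of the symmetric Lucas matrix. First I would fix the initial sequences $a_{n}^{0}=L_{n-1}$ and $a_{0}^{n}=L_{2n-1}$ ($n\geq 1$), so that $(a_{n}^{k})$ is the matrix \eqref{fibmat} with every $F$ replaced by $L$. The Remark following Proposition~\ref{propfib2} guarantees the Lucas analogue of Proposition~\ref{propfib1}, namely
\begin{equation*}
{}^{k}a\left( t\right) =\sum_{n=1}^{\infty }\left( A_{n,k}+B_{n,k}\right) t^{n}=\frac{t\left( L_{2k}+tL_{2k-1}\right) }{1-t-t^{2}},
\end{equation*}
where now $A_{n,k}=\sum_{i=0}^{k-1}\binom{n+k-i-2}{n-1}L_{2i+1}$ and $B_{n,k}=\sum_{i=0}^{n-1}\binom{n+k-i-2}{k-1}L_{i}$ are understood as the Lucas versions of \eqref{An,k}. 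Re-establishing this identity through \eqref{gengenluc} and the same partial-sum manipulation used in the proof of Proposition~\ref{propfib1} is the first concrete task.

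Next I would split the rational function in \eqref{inlucgen} as
\begin{equation*}
S_{k}\left( t\right) =\frac{t^{2k}\left( L_{2k}+L_{2k-1}t\right) }{1-t-t^{2}}-\frac{t^{2k+2}\left( 2-t\right) }{\left( 1-t\right) ^{k+1}\left( 1-t-t^{2}\right) }.
\end{equation*}
Shifting indices, the first piece equals $t^{2k-1}\,{}^{k}a\left( t\right) =\sum_{n=2k}^{\infty }\left( A_{n-2k+1,k}+B_{n-2k+1,k}\right) t^{n}$. For the second piece I would invoke the generating function of the hyperlucas numbers, $\sum_{n=0}^{\infty }L_{n}^{(k+1)}t^{n}=\frac{2-t}{\left( 1-t-t^{2}\right) \left( 1-t\right) ^{k+1}}$, which identifies it with $t^{2k+2}\sum_{n=0}^{\infty }L_{n}^{(k+1)}t^{n}=\sum_{n=2k+2}^{\infty }L_{n-2k-2}^{(k+1)}t^{n}$. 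The crucial structural point, and the reason the decomposition closes exactly, is that the factor $t^{2}(2-t)$ in the numerator of \eqref{inlucgen}, in contrast to the bare $t^{2}$ appearing for the Fibonacci generating function \eqref{inibgen}, is precisely the numerator $2-t$ of the hyperlucas generating function.

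Finally I would read off the coefficient of $t^{n}$ in
\begin{equation*}
\sum_{j=0}^{\infty }L_{j}\left( k\right) t^{j}=\sum_{n=2k}^{\infty }\left( A_{n-2k+1,k}+B_{n-2k+1,k}\right) t^{n}-\sum_{n=2k+2}^{\infty }L_{n-2k-2}^{(k+1)}t^{n}.
\end{equation*}
For $n<2k$ both series are silent, giving $L_{n}(k)=0$; for $2k\leq n\leq 2k+1$ only the first series contributes, giving $A_{n-2k+1,k}+B_{n-2k+1,k}$; and for $n\geq 2k+2$ both contribute, giving the stated difference. The one place that needs care, and which I expect to be the main obstacle, is verifying that the $n=2k$ coefficient collapses to the advertised value $L_{2k}$: here $A_{1,k}+B_{1,k}=\sum_{i=0}^{k-1}L_{2i+1}+L_{0}$, and one must combine the Lucas summation identity $\sum_{i=1}^{k}L_{2i-1}=L_{2k}-2$ from \eqref{helpluc} of Proposition~\ref{probfibeq} with $L_{0}=2$ to cancel the constants. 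This boundary bookkeeping, rather than any deep difficulty, is where the proof can most easily go wrong, so I would check the $n=2k$ and $n=2k+1$ entries of the Lucas matrix directly against the formula before declaring the comparison complete.
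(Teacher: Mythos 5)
Your proposal is correct and follows essentially the same route as the paper: the paper's proof of this theorem is simply "similar to Theorem \ref{lastfib}", i.e. the decomposition of the incomplete generating function \eqref{inlucgen} into the row generating function of the symmetric Lucas matrix plus the hyperlucas generating function, which is exactly what you carry out. Your explicit check that $A_{1,k}+B_{1,k}=L_{2k}$ via \eqref{helpluc} supplies a boundary verification the paper leaves implicit.
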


\begin{proof}
Proof is similar to the theorem \ref{lastfib}.
\end{proof}

\begin{corollary}
We have 
\begin{equation*}
L_{2k+1}=\sum_{i=0}^{k-1}\left( k-i\right) L_{2i+1}+2k+1.
\end{equation*}
\end{corollary}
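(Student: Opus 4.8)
The plan is to read the identity off directly from Theorem \ref{lastluc} at the single value $n=2k+1$, exactly as the Fibonacci corollary \eqref{fibnew1} was extracted from Theorem \ref{lastfib}. The only external fact needed is that the incomplete Lucas number is already complete at this index: since $\lfloor (2k+1)/2\rfloor = k$, the defining sum for $L_{2k+1}(k)$ runs over its full range, so $L_{2k+1}(k)=L_{2k+1}$. Here $A_{n,k}$ and $B_{n,k}$ are understood as the Lucas analogues of \eqref{An,k}, obtained by replacing $F$ with $L$ as in the remark following Proposition \ref{propfib2}; that is, $A_{n,k}=\sum_{i=0}^{k-1}\binom{n+k-i-2}{n-1}L_{2i+1}$ and $B_{n,k}=\sum_{i=0}^{n-1}\binom{n+k-i-2}{k-1}L_{i}$.

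First I would substitute $n=2k+1$ into \eqref{incompalg}. This lands precisely on the third branch and gives $L_{2k+1}(k)=A_{2,k}+B_{2,k}$. Combining with $L_{2k+1}(k)=L_{2k+1}$ reduces the whole statement to evaluating the two finite sums $A_{2,k}$ and $B_{2,k}$ at first index $2$, so no generating-function manipulation is required beyond what Theorem \ref{lastluc} already supplies.

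Next I would compute the two pieces. In $A_{2,k}$ the binomial coefficient collapses, $\binom{2+k-i-2}{2-1}=\binom{k-i}{1}=k-i$, so $A_{2,k}=\sum_{i=0}^{k-1}(k-i)L_{2i+1}$, which is exactly the sum appearing on the right-hand side of the corollary. In $B_{2,k}=\sum_{i=0}^{1}\binom{k-i}{k-1}L_{i}$ only the terms $i=0,1$ occur: the $i=0$ term is $\binom{k}{k-1}L_{0}=k\cdot 2=2k$ and the $i=1$ term is $\binom{k-1}{k-1}L_{1}=1$, whence $B_{2,k}=2k+1$. Adding the two pieces produces $L_{2k+1}=\sum_{i=0}^{k-1}(k-i)L_{2i+1}+2k+1$, as claimed.

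The argument is routine bookkeeping, so there is no genuine obstacle; the one point that deserves care — and the reason the constant here is $2k+1$ rather than the lone $1$ of the Fibonacci corollary \eqref{fibnew1} — is the $i=0$ term of $B_{2,k}$, which picks up the Lucas initial value $L_{0}=2$ in place of $F_{0}=0$. A quick check of the case $k=1$, where $L_{3}=4=1\cdot L_{1}+2\cdot 1+1$, confirms that the index conventions have been applied correctly.
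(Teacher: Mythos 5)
Your proposal is correct and follows the route the paper intends: read off the $n=2k+1$ branch of \eqref{incompalg}, identify $L_{2k+1}(k)=L_{2k+1}$, and evaluate the Lucas analogues of $A_{2,k}$ and $B_{2,k}$ by collapsing the binomials (the paper omits the proof, but this mirrors its proof of the Fibonacci corollary \eqref{fibnew1}--\eqref{fibnew2}). Your observation that the extra $2k$ comes from $L_{0}=2$ in the $i=0$ term of $B_{2,k}$ is exactly the right bookkeeping point.
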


\begin{center}
\textbf{Acknowledgement}
\end{center}

The authors would like to thank to Professor Akos Pinter for his help and
support.

\end{document}